\def\bp{\pmb{p}}
\def\bbC{\mathbb{C}}
\def\bbR{\mathbb{R}}
\def\scrH{\mathscr{H}}
\def\scrR{\mathscr{R}}
\def\cR{{\cal R}}
\def\pjbd{{\sc pjbd}}
\def\jbd{{\sc jbd}}
\def\pjd{{\sc pjd}}
\def\jd{{\sc jd}}
\newcommand\STM[2]{{\rm St}(#1,#2)}
\def\wtd{\widetilde}
\def\what{\widehat}
\DeclareMathOperator*{\argmax}{argmax}
\DeclareMathOperator{\BDiag}{BDiag}
\DeclareMathOperator{\diag}{diag}
\DeclareMathOperator{\grad}{grad}
\DeclareMathOperator{\init}{init}
\DeclareMathOperator*{\opt}{opt}
\DeclareMathOperator{\rank}{rank}
\DeclareMathOperator{\sym}{sym}
\DeclareMathOperator{\tot}{tot}
\DeclareMathOperator{\tr}{tr}
\DeclareMathOperator{\F}{F}
\DeclareMathOperator{\HH}{H}
\DeclareMathOperator{\T}{T}
\DeclareMathOperator{\KKT}{KKT}
\def\scrR{\mathscr{R}}
\def\Red{\textcolor{red}}
\newtheorem{theorem}{Theorem}[section]
\newtheorem{lemma}{Lemma}[section]
\theoremstyle{definition}
\newtheorem{remark}{Remark}[section]
\numberwithin{equation}{section}
\numberwithin{figure}{section}
\numberwithin{table}{section}
\def\sss{\scriptstyle}
\title{
An NPDo Approach for Principal Joint  Block Diagonalization
}
\author{Ren-Cang Li%
\thanks{Department of Mathematics, University of Texas at Arlington, Arlington, TX 76019-0408, USA.
        Supported in part by NSF DMS-2407692.
        Email: {\tt rcli@uta.edu}.}
\and
Ding Lu%
\thanks{Department of Mathematics, University of Kentucky, Lexington, KY 40506, USA. Supported in part by NSF DMS-2110731. Email: {\tt Ding.Lu@uky.edu}.}
\and
Li Wang%
\thanks{Department of Mathematics, University of Texas at Arlington, Arlington, TX 76019-0408, USA.
        Supported in part by NSF DMS-2407692.
        Email: {\tt Li.Wang@uta.edu}.}
\and
Lei-Hong Zhang%
\thanks{School of Mathematical Sciences, Soochow University, Suzhou 215006, Jiangsu, China.
 Supported in part by the National Natural Science Foundation of China (NSFC-12471356, NSFC-12371380), Jiangsu Shuangchuang Project (JSSCTD202209),  Academic Degree and Postgraduate Education Reform Project of Jiangsu Province, and China Association of Higher Education under grant 23SX0403.
        Email: {\tt longzlh@suda.edu.cn}.}
}
\date{
August 20, 2025
}
\begin{document}

\maketitle

\centerline{\em Dedicated to Prof. \AA ke Bj{\"o}rck on the occasion of his 90th birthday}

\begin{abstract}
Matrix joint
block-diagonalization (\jbd)
frequently arises from diverse applications such as
independent component analysis,
blind source separation,
and common principal component analysis (CPCA),
among others.
Particularly, CPCA aims at joint diagonalization, i.e.,
each block size being $1$-by-$1$.
This paper is concerned with {\em  principal joint block-diagonalization\/} (\pjbd),
which aim to achieve two goals:
1)~partial joint block-diagonalization, and
2)~identification of dominant common block-diagonal parts for all involved matrices.
This is in contrast to most existing methods,
especially the popular ones based on Givens rotation,
which focus on full joint diagonalization and quickly become
impractical for matrices of even moderate size ($300$-by-$300$ or larger).
An  NPDo approach is proposed and it is
built on a {\em nonlinear polar decomposition with orthogonal polar factor dependency}
that characterizes the solutions of the optimization problem designed to achieve \pjbd,
and it is shown the associated SCF iteration is globally convergent to a stationary point
while the objective function increases monotonically during the iterative process.
Numerical experiments are presented to illustrate the effectiveness of
the NPDo approach and its superiority to Givens rotation-based methods.

\bigskip
\noindent
{\bf Keywords:}
principal joint block-diagonalization, \pjbd,
principal joint diagonalization, \pjd,
common principal component analysis,
NPDo,
SCF

\smallskip
\noindent
{\bf Mathematics Subject Classification}  62H25; 65F30; 65K05; 90C26
\end{abstract}

\clearpage
\tableofcontents

\clearpage
\section{Introduction}\label{sec:intro}
%

Consider the problem of {\em joint block-diagonalization} (\jbd):
Given $N$ Hermitian matrices $A_{\ell}\in\bbC^{n\times n}$,
for $\ell=1,\dots,N$,
the goal is to find some unitary $P\in\bbC^{n\times n}$ such that
$P^{\HH}A_{\ell}P$,
for $\ell = 1,\dots,N$,
are block-diagonal with the same given block-diagonal structure.
Such a matrix $P$ is called a {\em block-diagonalizer}.
A notable special case is the {\em joint diagonalization} (\jd), where each $P^{\HH}A_{\ell}P$
is diagonal and $P$ is accordingly referred to as a {\em diagonalizer}.

In general, for $N\ge 2$, an exact (block-)diagonalizer does not exist
unless certain strong conditions are met.
For example, $\{A_{\ell}\}_{\ell = 1}^N$  are jointly diagonalizable if and only if
they pairwise commute,
i.e., $A_{\ell}A_j=A_jA_{\ell}$ for any $\ell\ne j$ (see, e.g.,~\cite{hojo:2013}).
In practice, however, we typically seek a unitary $P$ such that all
matrices $P^{\HH}A_{\ell}P$  are as close as possible,
under a suitable measure, to a desired block-diagonal form.
Thus,  the \jbd\ problem is inherently an
{\em approximate joint block-diagonalization} problem. For simplicity, we will
not distinguish between the ``exact'' and ``approximate'' \jbd\ in this paper
and treat all such diagonalization problems as approximate ones by default.

\jbd\ is a classical problem in numerical linear algebra
and arises from various important applications, including
independent component analysis, also known as {\em multidimensional independent component analysis\/} (MICA),
and blind source separation \cite{beac:1997,card:1998,card:1999,como:1994,dldv:2000,mosc:1994,phca:2001,thei:2005,thei:2006,wupr:1999,yere:2002,zimn:2000},
common principal component analysis (CPCA) \cite{aifl:1988,bopr:2010,brmc2014,fehv:2003,flur:1988,gu:2016,li:2019,meho:2003,pepl:2014,wida:2018},
semidefinite programming \cite{badp:2009,dkps:2007,dkso:2010,gapa:2004}, and
kernel-based  nonlinear blind source separation \cite{hazk:2003}.
In those applications, approximate block-diagonalizers 
play major roles.

The most widely used approaches for solving \jd\
are Jacobi-like algorithms based on (complex) planar rotations.
Originally developed for the symmetric eigenvalue problem (SEP) \cite{parl:1998},
Jacobi’s method became popular due to its simplicity, embarrassing
parallelism, and ability to achieve high accuracy~\cite{demm:1997,govl:2013,parl:1998}.
The basic idea is to apply a sequence of planar rotations to increase the  diagonal part,
or equivalently reduce the  off-diagonal part,
of a Hermitian matrix, so as to drive the matrix to the diagonal form.
This approach naturally extends to \jd\ \cite{bubm:1993,caso:1996},
where the goal is to increase the total diagonal part, or equivalently
reduce the total off-diagonal part, of all $N$ matrices.
However, unlike the single matrix case, it is not that straightforward
to compute the optimal rotations.
Cardoso and Souloumiac \cite{caso:1996} discovered an elegant method for
addressing this issue.  Their algorithm for \jd\ proceeds in sweeps,
each consisting of $n(n-1)/2$ planar rotations that cover all
the $n(n-1)/2$ index pairs $(i,j)$ for the off-diagonal positions.  The
method  stops when further improvement becomes negligible.
In a similar spirit, for positive definite and real matrices $A_{\ell}$,
Flury and Gautschi \cite{flga:1986} proposed an inner-outer iterative
scheme, called the FG-algorithm, that
iteratively solves the first-order optimality conditions for optimizing
a suitable objective function that measures simultaneous deviation from diagonality.
These optimality conditions take the form of $n(n-1)/2$ scalar equations \cite{flur:1984},
and their algorithm proceeds in sweeps,
each consisting of $n(n-1)/2$ planar rotations as well, with each rotation
targeting one of the first-order equations.
The FG-algorithm stops when all the first-order conditions are satisfied approximately.


We note in passing that, in broader settings, the  \jbd\ problem may also use
a merely nonsingular (not necessarily unitary) diagonalizer $P$.
Finding a block-diagonalizer $P$ that is only assumed nonsingular
is numerically more delicate, as numerical instability can arise unexpectedly.
As a result, extra care must be taken in the design and implementation of algorithms.
Several methods have been proposed to address this more general setting; see, e.g.,
\cite{abcg:2019,cali:2017,cali:2019,chkm:2014,dela:2009,pham:2001,tipc:2017,ziln:2004}
and references therein, but critical issues remain.


Most existing methods are about  full \jbd\ (or full \jd\ for that matter).
The main goal of this paper is to introduce the notion of {\em principal \jbd} (\pjbd) and
the {\em nonlinear polar decomposition with orthogonal polar factor dependency}
(NPDo) approach for its efficient computation.
 \pjbd\ is a type of {\em partial} {\jbd}, where
a subspace of a lower dimension $k<n$ is identified
to capture the most significant joint block-diagonal components across
all $\{A_{\ell}\}_{\ell=1}^N$,
and the matrices are jointly block-diagonalized over this subspace.
This is particularly useful when $n$ is large, for which  {\em full \jbd}
is prohibitively expensive to compute and unnecessary,
and it is sufficient to go for the dominant components,
similar in spirit to the principal component analysis (PCA) commonly used in
statistics and data science.
The general mathematical and numerical
foundation of the NPDo approach was recently developed in \cite{li:2024},
which also guarantees the global convergence of the underlying iterative process.
We note that both Jacobi's method in \cite{bubm:1993,caso:1996} and
the FG-algorithm \cite{flga:1986} are for full \jd\
and not directly applicable in settings where only a partial
\jd\ is of interest, although they do induce \pjd\ as we will explain
later in subsection~\ref{ssec:egs-pjd}.


The rest of this paper is organized as follows.
In \cref{sec:NPDo}, we study the numerical solution of a model optimization
problem on  Stiefel manifold via the NPDo approach,
which will serve as the computational engine for our main objective:
 efficient computation of \pjbd\ to be detailed in \cref{sec:NPDo4PJBD}.
Extensive numerical demonstrations are presented in \cref{sec:egs}.
Final conclusions are drawn in \cref{sec:concl}.

{\bf Notation.}
We follow conventional notation in matrix analysis.
The set $\bbR^{m\times n}$  denotes  $m\times n$ real matrices,
with $\bbR^n=\bbR^{n\times 1}$, and $\bbR=\bbR^1$.
Similarly, $\bbC^{m\times n}$,  $\bbC^n$, and $\bbC$ denote the
corresponding sets in the complex number field.
The identity matrix of size $n$ is denoted by $I_n$,
or simply $I$ if its size is clear from the context.
For any matrix or vector $B$,
$B^{\T}$ and $B^{\HH}$ stand for the transpose and  complex conjugate
transpose, respectively.
The (complex) Stiefel manifold $\STM{k}{n}$ consists
of all $P\in\bbC^{n\times k}$ with orthonormal columns:
\begin{equation}\label{eq:stm} 
\STM{k}{n}=\{P\in\bbC^{n\times
k}\,:\,P^{\HH}P=I_k\}\subset\bbC^{n\times k}.
\end{equation}
A square matrix $A\succ 0$ (resp., $\succeq 0$) means that it is Hermitian and
positive definite (resp., semidefinite); accordingly $A\prec 0\, (\preceq
0)$ if $-A\succ 0\, (\succeq 0)$.
For $B\in\bbC^{m\times n}$, its SVD refers to the {\em thin\/} form
$B=U\Sigma V^{\HH}$, where, with $s=\min\{m,n\}$, $U\in\STM{s}{m}$, $V\in\STM{s}{n}$, and
$\Sigma=\bbR^{s\times s}$ is diagonal with
the singular values  on the diagonal.
We use $\|B\|_2$ to denote the matrix 2-norm (i.e., the largest singular value of $B$)
and $\|B\|_{\F}:=\sqrt{\tr(B^{\HH} B)}$ the Frobenius norm.
Other notation will be explained at their first appearances.

%
%
		

\section{NPDo Approach for the Optimization Problem}\label{sec:NPDo}
In this section, we present an NPDo approach to solve the following maximization problem
\begin{subequations}\label{eq:OptOnSTM-master}
\begin{equation}\label{eq:OptOnSTM-master-a}
\max_{P\in\STM{k}{n}} \Big\{ f(P):=\sum_{i=1}^M\omega_i\tr\big([P_i^{\HH}A_iP_i]^{s_i}\big)\Big\},
\end{equation}
where
\begin{equation}\label{eq:OptOnSTM-master-b}
      \framebox{
      \parbox{12.0cm}{
      $A_i\succeq 0$ for $1\le i\le M$, positive numbers $\omega_i>0$, integers $s_i\ge 1$,  $P_i\in\bbC^{n\times k_i}$ for $1\le i\le M$ are
submatrices consisting of a few or all columns of $P$ (in particular, sharing
common columns of $P$ by different $P_i$ is allowed).
      }}
\end{equation}
\end{subequations}
We can see that each $P_i$ can be expressed as
\begin{equation}\label{eq:Pi=PJi}
P_i=PJ_i\quad\mbox{for $i=1,\dots, M$},
\end{equation}
where each $J_i\in\bbR^{k\times k_i}$ is a submatrix of $I_k$, consisting of those columns of $I_k$
with the same column indices as $P_i$ to $P$.

Problem~\eqref{eq:OptOnSTM-master} will serve
as the computational engine to our proposed method  later,
where  $s_i\in\{1,2\}$.
When all $s_i=1$, it coincides with the main problem
in~\cite{wazl:2022a} on maximizing the sum of coupled traces,
which also represents a special case of~\cite[Example 5.3]{li:2024}.
Throughout this section, we stick to the notation and assumptions
introduced with \eqref{eq:OptOnSTM-master}:
$A_i\succeq 0$
for $i=1,\dots, M$,
and $P_i$ are submatrices of $P$ as in~\eqref{eq:Pi=PJi}.

\subsection{NPDo Approach}
NPDo,  coined in \cite{li:2024},
stands for {\em nonlinear polar decomposition with orthogonal polar factor dependency\/}.
The NPDo approach was formally developed in~\cite{li:2024}, based on the
{\em NPDo Ansatz}.
While the original approach was presented for the real number field,
its extension to the complex number field is straightforward, as noted in~\cite[section~10]{li:2024}.

The optimization problem~\eqref{eq:OptOnSTM-master}
naturally fits within the NPDo framework.
Its objective function is a conic combination of terms in the form of
$\tr\big([P_i^{\HH}A_iP_i]^{s_i}\big)$
for $i=1,\dots,M$,
each of which satisfies the criteria for {\em atomic
functions} in the NPDo framework (see~\Cref{lm:NPDo:AF-PAP} below).
According to the NPDo theory developed in~\cite{li:2024},
the NPDo approach is globally convergent for
an objective function that is a convex composition of atomic functions,
making the framework directly applicable to~\eqref{eq:OptOnSTM-master}.
That said, optimization problem \eqref{eq:OptOnSTM-master} represents a special
case within the NPDo framework, allowing for several simplifications.
For clarity and self-containment, we
outline in the following a simplified realization of the NPDo approach
tailored to \eqref{eq:OptOnSTM-master}.

We begin by recalling a result from~\cite{li:2024}
on the derivatives for each individual
trace function in the objective $f$
from~\eqref{eq:OptOnSTM-master}.

\begin{lemma}[{\cite[Theorem 4.4]{li:2024}}]\label{lm:NPDo:AF-PAP}
Let $A\in\bbC^{n\times n}$ be Hermitian positive semidefinite,
$s\ge 1$  an integer,
and $Z\in\bbC^{n\times k}$ a variable.
It holds that\footnote{
     The original result was stated for real matrix variables $Z$, but in the complex case it can be understood in the same
     way as upon perturbing $Z$ to $Z+E$ for sufficiently small $\|E\|_2$ to get
     $$
     \tr\big(\big[(Z+E)^{\HH}A(Z+E)\big]^s\big)
        =\tr\big(\big[Z^{\HH}AZ\big]^s\big)
          +\Re\tr\Big(E^{\HH}\frac {\partial \tr\big(\big[Z^{\HH}AZ\big]^s\big)}{\partial Z}\Big)+O(\|E\|_2^2).
     $$
     }
$$
\frac {\partial \tr\big(\big[Z^{\HH}AZ\big]^s\big)}{\partial
Z}=2s\cdot AZ\big(Z^{\HH}AZ\big)^{s-1},
$$
and for all $Z,\,\what Z\in\bbC^{n\times m}$,  we have
\begin{subequations}\label{eq:NPDo:AF-PAP}
\begin{align}
\tr\Big(Z^{\HH}\frac {\partial \tr\big(\big[Z^{\HH}AZ\big]^s\big)}{\partial Z}\Big)
&= 2s\cdot \tr\big(\big[Z^{\HH}AZ\big]^s\big),  \label{eq:NPDo:AF-PAP-1}\\
\Re\tr\Big(\what Z^{\HH}\frac {\partial \tr\big(\big[Z^{\HH}AZ\big]^s\big)}{\partial Z}\Big)
&\le \tr\big(\big[\what Z^{\HH}A\what Z\big]^s\big)+(2s-1)\cdot \tr\big(\big[Z^{\HH}AZ\big]^s\big), \label{eq:NPDo:AF-PAP-2}
\end{align}
where $\Re(\cdot) $ extracts the real part of a complex number.
\end{subequations}
\end{lemma}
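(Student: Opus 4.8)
The plan is to derive the gradient formula first by a direct first‑order perturbation, read off the identity~\eqref{eq:NPDo:AF-PAP-1} as an immediate corollary, and then obtain the inequality~\eqref{eq:NPDo:AF-PAP-2} from the convexity of $Z\mapsto\tr([Z^{\HH}AZ]^s)$. For the gradient, I would set $M:=Z^{\HH}AZ$ and $\delta M:=Z^{\HH}AE+E^{\HH}AZ$, so that $(Z+E)^{\HH}A(Z+E)=M+\delta M+E^{\HH}AE$, and use $(M+\delta M+\cdots)^s=M^s+\sum_{j=0}^{s-1}M^j(\delta M)M^{s-1-j}+O(\|E\|_2^2)$; taking the trace and invoking cyclicity collapses the middle sum to $s\,\tr(M^{s-1}\delta M)$. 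Since $A$ and $M^{s-1}$ are Hermitian, the two pieces $s\,\tr(M^{s-1}Z^{\HH}AE)$ and $s\,\tr(M^{s-1}E^{\HH}AZ)$ are complex conjugates of one another, so their sum is $2s\,\Re\tr\big(E^{\HH}AZM^{s-1}\big)$; matching with the convention in the footnote yields $\partial\tr([Z^{\HH}AZ]^s)/\partial Z=2s\,AZ(Z^{\HH}AZ)^{s-1}$. Then~\eqref{eq:NPDo:AF-PAP-1} follows by cyclicity, $\tr\big(Z^{\HH}\cdot 2s\,AZ(Z^{\HH}AZ)^{s-1}\big)=2s\,\tr\big((Z^{\HH}AZ)^s\big)$.

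For~\eqref{eq:NPDo:AF-PAP-2}, the crux is that $g(Z):=\tr([Z^{\HH}AZ]^s)$ is convex (as a function on the real vector space underlying $\bbC^{n\times k}$). Factoring $A=A^{1/2}A^{1/2}$ and using $Z^{\HH}AZ=(A^{1/2}Z)^{\HH}(A^{1/2}Z)$, whose eigenvalues are the squared singular values of $A^{1/2}Z$, one gets $g(Z)=\sum_i\sigma_i(A^{1/2}Z)^{2s}=\|A^{1/2}Z\|_{(2s)}^{2s}$, where $\|\cdot\|_{(2s)}$ denotes the Schatten $2s$-norm. Since $\|\cdot\|_{(2s)}$ is a genuine norm for $2s\ge1$ and $t\mapsto t^{2s}$ is convex and nondecreasing on $[0,\infty)$, the map $X\mapsto\|X\|_{(2s)}^{2s}$ is convex, and precomposition with the linear map $Z\mapsto A^{1/2}Z$ preserves convexity; hence $g$ is convex and differentiable with the gradient computed above. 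The gradient inequality $g(\what Z)\ge g(Z)+\Re\tr\big((\what Z-Z)^{\HH}\nabla g(Z)\big)$, after transposing the $Z$-term and using $\Re\tr(Z^{\HH}\nabla g(Z))=2s\,g(Z)$ from~\eqref{eq:NPDo:AF-PAP-1}, gives exactly $\Re\tr\big(\what Z^{\HH}\nabla g(Z)\big)\le g(\what Z)+(2s-1)g(Z)$, which is~\eqref{eq:NPDo:AF-PAP-2}.

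The perturbation bookkeeping in the first step is routine; the point requiring the most care is the convexity claim — in particular the reduction to a power of a Schatten norm (only the singular values of $A^{1/2}Z$ must enter) and the correct composition rule. If a self‑contained, majorization‑free argument is preferred, \eqref{eq:NPDo:AF-PAP-2} can instead be proved by elementary estimates: with $X=A^{1/2}Z$, $Y=A^{1/2}\what Z$ and a thin SVD $X=U\Sigma V^{\HH}$ one computes $\Re\tr\big(\what Z^{\HH}\nabla g(Z)\big)=2s\sum_i\Re(w_{ii})\,\sigma_i^{2s-1}$ with $W:=U^{\HH}YV$; then the scalar weighted AM--GM inequality $2s\,ab^{2s-1}\le a^{2s}+(2s-1)b^{2s}$ applied termwise, followed by weak majorization of $(|w_{ii}|)$ by the singular values of $W$ and the compression bound $\tr\big((W^{\HH}W)^s\big)\le\tr\big((Y^{\HH}Y)^s\big)$, closes the estimate. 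Either way, the convexity viewpoint is the cleanest and is consistent with the ``atomic function'' terminology of the NPDo framework.
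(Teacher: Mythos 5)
Your proposal is correct, but note that the paper itself does not prove this lemma: it is imported verbatim as \cite[Theorem 4.4]{li:2024}, with only the footnote explaining how the real-variable statement transfers to complex $Z$ via the convention $\Re\tr\big(E^{\HH}\,\partial(\cdot)/\partial Z\big)$. So what you have written is a self-contained substitute for the citation rather than a variant of an in-paper argument. Your gradient computation is exactly the routine first-order expansion suggested by the footnote, and \eqref{eq:NPDo:AF-PAP-1} follows by cyclicity as you say. Where you genuinely depart from the cited source is \eqref{eq:NPDo:AF-PAP-2}: you derive it as the subgradient inequality for the convex function $Z\mapsto\tr\big([Z^{\HH}AZ]^s\big)$, identified as the $2s$-th power of the Schatten $2s$-norm of $A^{1/2}Z$ composed with a linear map, combined with the Euler-type identity \eqref{eq:NPDo:AF-PAP-1}. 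This is clean and correct (the one point to be careful about, which you handle implicitly, is that the ``gradient'' in the convexity inequality must be taken with respect to the real inner product $\langle X,Y\rangle=\Re\tr(X^{\HH}Y)$, which is precisely the footnote's convention), and it makes transparent why such functions are ``atomic'' in the NPDo sense: the Ansatz inequality \eqref{eq:f(P)-Ansatz} is literally convexity plus homogeneity. The elementary route you sketch as a fallback --- SVD of $A^{1/2}Z$, the scalar inequality $2s\,ab^{2s-1}\le a^{2s}+(2s-1)b^{2s}$, weak majorization of diagonal entries by singular values, and the compression bound --- is closer in spirit to the proof in \cite{li:2024}, which works through von Neumann-type trace inequalities; it is longer but avoids invoking convexity of Schatten norms. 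Either argument fully establishes the lemma.
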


By~\Cref{lm:NPDo:AF-PAP}, we
quickly obtain an expression for the derivative of $f$
from~\eqref{eq:OptOnSTM-master} as
\begin{subequations}\label{eq:scrH-NPDo}
\begin{align}
\scrH(P):=\frac {\partial f(P)}{\partial P}
    &=\sum_{i=1}^M\omega_i\frac {\partial \tr\big([P_i^{\HH}A_iP_i]^{s_i}\big)}{\partial P_i}J_i^{\T}
        \label{eq:scrH-NPDo-1} \\
    &=\sum_{i=1}^M2s_i\omega_iA_iP_i(P_i^{\HH}A_iP_i)^{s_i-1}J_i^{\T}\in\bbC^{n\times k}. \label{eq:scrH-NPDo-2}
\end{align}
\end{subequations}
Moreover, by \eqref{eq:NPDo:AF-PAP}
--
the defining properties for {\em atomic functions} of NPDo as introduced
in~\cite{li:2024}
--
we readily establish the so-called NPDo Ansatz for the objective
$f(\cdot)$, as presented in~\Cref{thm:f(P)-Ansatz}.
While this theorem can also be viewed as a corollary of
\cite[Theorem~5.1]{li:2024},
we present it here in a different form and
include a proof for its simplicity and completeness.


\begin{theorem}[{\cite[Theorem~5.1]{li:2024}}]\label{thm:f(P)-Ansatz}
Let $\scrH(P)$ denote the derivatives of $f(P)$ as in~\eqref{eq:scrH-NPDo}.
For all $P,\,\what P\in\bbC^{n\times k}$,
\begin{equation}\label{eq:f(P)-Ansatz}
f(\what P)- f(P) \geq \Re\tr\Big(\what P^{\HH}\scrH(P)\Big) - \tr\Big(P^{\HH}\scrH(P)\Big).
\end{equation}
\end{theorem}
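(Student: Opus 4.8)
The plan is to reduce the claimed inequality to a termwise application of \Cref{lm:NPDo:AF-PAP}, after which the conic combination structure and a small telescoping in the exponents $s_i$ close the argument. Write $g_i(Z):=\tr\big([Z^{\HH}A_iZ]^{s_i}\big)$ for the $i$-th atomic function, so that $f(P)=\sum_{i=1}^M\omega_i\,g_i(P_i)$ with $P_i=PJ_i$, and recall from \eqref{eq:scrH-NPDo-1} that $\scrH(P)=\sum_{i=1}^M\omega_i\,G_i\,J_i^{\T}$, where $G_i:=\dfrac{\partial g_i}{\partial Z}\Big|_{Z=P_i}=2s_i\,A_iP_i(P_i^{\HH}A_iP_i)^{s_i-1}$ by \Cref{lm:NPDo:AF-PAP}.

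First I would rewrite the two trace quantities on the right-hand side of \eqref{eq:f(P)-Ansatz} in ``per-block'' form. Since each $J_i$ is a real submatrix of $I_k$, cyclicity of the trace gives $\tr\big(\what P^{\HH}G_iJ_i^{\T}\big)=\tr\big((\what PJ_i)^{\HH}G_i\big)=\tr\big(\what P_i^{\HH}G_i\big)$ with $\what P_i:=\what PJ_i$, and likewise $\tr\big(P^{\HH}G_iJ_i^{\T}\big)=\tr\big(P_i^{\HH}G_i\big)$. Taking real parts and summing against the weights $\omega_i>0$ yields
\begin{equation*}
\Re\tr\big(\what P^{\HH}\scrH(P)\big)=\sum_{i=1}^M\omega_i\,\Re\tr\big(\what P_i^{\HH}G_i\big),
\qquad
\tr\big(P^{\HH}\scrH(P)\big)=\sum_{i=1}^M\omega_i\,\tr\big(P_i^{\HH}G_i\big).
\end{equation*}

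Next I would invoke \Cref{lm:NPDo:AF-PAP} with $A=A_i$, $s=s_i$, $Z=P_i$, $\what Z=\what P_i$: identity \eqref{eq:NPDo:AF-PAP-1} gives $\tr\big(P_i^{\HH}G_i\big)=2s_i\,g_i(P_i)$, while inequality \eqref{eq:NPDo:AF-PAP-2} gives $\Re\tr\big(\what P_i^{\HH}G_i\big)\le g_i(\what P_i)+(2s_i-1)\,g_i(P_i)$. Substituting these into the two displayed sums, and then taking the difference, the coefficient of $\omega_i\,g_i(P_i)$ becomes $(2s_i-1)-2s_i=-1$, so that
\begin{equation*}
\Re\tr\big(\what P^{\HH}\scrH(P)\big)-\tr\big(P^{\HH}\scrH(P)\big)
\le \sum_{i=1}^M\omega_i\,g_i(\what P_i)-\sum_{i=1}^M\omega_i\,g_i(P_i)
= f(\what P)-f(P),
\end{equation*}
which is precisely \eqref{eq:f(P)-Ansatz}.

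There is no serious obstacle here; the argument is essentially bookkeeping. The one point that warrants care is the passage between the derivative of $f$ with respect to the full variable $P$ and the derivatives of the atomic functions $g_i$ with respect to their arguments $P_i=PJ_i$ — i.e.\ keeping the selection matrices $J_i$ and the adjoints straight so that trace cyclicity applies cleanly — together with noting that $\Re(\cdot)$ is $\mathbb{R}$-linear so it commutes with the conic combination $\sum_i\omega_i(\cdot)$. Everything else follows by summing the two statements of \Cref{lm:NPDo:AF-PAP} over $i$.
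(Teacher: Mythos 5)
Your proposal is correct and follows essentially the same route as the paper's own proof: expand $\scrH(P)$ termwise via \eqref{eq:scrH-NPDo-1}, use trace cyclicity with the selection matrices $J_i$ to pass from $\what P$ to $\what P_i=\what PJ_i$, then apply \eqref{eq:NPDo:AF-PAP-1} and \eqref{eq:NPDo:AF-PAP-2} to each atomic term and sum against the weights $\omega_i$. The only difference is cosmetic (your $g_i$, $G_i$ shorthand), so nothing further is needed.
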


\begin{proof}
By~\eqref{eq:scrH-NPDo-1}, we get
\begin{align*}
\tr(\what P^{\HH}\scrH(P))
  &= \sum_{i=1}^M\omega_i\tr\Big(\what P^{\HH}\frac {\partial \tr\big([P_i^{\HH}A_iP_i]^{s_i}\big)}{\partial P_i}J_i^{\T}\Big)
  &= \sum_{i=1}^M\omega_i\tr\Big(\what P_i^{\HH}\frac {\partial \tr\big([P_i^{\HH}A_iP_i]^{s_i}\big)}{\partial P_i}\Big),
\end{align*}
where the last equality follows from the identity
$\tr(XY) = \tr(YX)$, with $Y=J_i^{\T}$,
and
$\what P_i = \what PJ_i$ is a submatrix of $\what P$
that selects the same columns as $P_i$ from $P$.
Hence, by~\eqref{eq:NPDo:AF-PAP}, we get
\begin{align*}
 \tr(P^{\HH}\scrH(P))
  &= \sum_{i=1}^M\omega_i \cdot 2s_i\tr\big([P_i^{\HH}A_iP_i]^{s_i}\big), \\ 
\Re\tr(\what P^{\HH}\scrH(P))
  &\le\sum_{i=1}^M\omega_i\Big[\tr\big([\what P_i^{\HH}A_i\what P_i]^{s_i}\big)+(2s_i-1)\tr\big([P_i^{\HH}A_iP_i]^{s_i}\big)\Big] .
\end{align*}
It follows that
$$
\Re\tr(\what P^{\HH}\scrH(P)) - \tr(P^{\HH}\scrH(P))
\leq
\sum_{i=1}^M\omega_i \tr\big([\what P_i^{\HH}A_i\what P_i]^{s_i}\big)
- \sum_{i=1}^M\omega_i  \tr\big([P_i^{\HH}A_iP_i]^{s_i}\big),
$$
where the right hand side coincides with $f(\what P)-f(P)$,
proving~\eqref{eq:f(P)-Ansatz}.
\end{proof}

A function $f$ satisfying property~\eqref{eq:f(P)-Ansatz}
is said to satisfy the NPDo Ansatz,
in the terminology of the NPDo framework in~\cite{li:2024}.
%
Note that~\eqref{eq:f(P)-Ansatz} does not require $P,\,\what P\in\STM{k}{n}$
but holds generally for $P,\,\what P\in\bbC^{n\times k}$,
which is actually more than what we will need for the purpose of
applying the NPDo approach.
While this result may seem like a special case of~\cite[Theorem~5.1]{li:2024},
it differs in that the latter is stated specifically for $P,\,\what P\in\STM{k}{n}$
(and for the real valued matrices).
%
A direct consequence of property~\eqref{eq:f(P)-Ansatz} is the following theorem,
which concerns the KKT condition of problem~\eqref{eq:OptOnSTM-master}
and forms the foundation of the NPDo approach.

\begin{theorem}[{\cite[Theorem~3.1]{li:2024}}]\label{thm:maximizer}
Let $P_*$ be a  maximizer of \eqref{eq:OptOnSTM-master}. Then the KKT condition
\begin{equation}\label{eq:KKT}
\scrH(P)=P\Lambda, \quad
P\in\STM{k}{n}, \quad \Lambda=\Lambda^{\HH}\in\bbC^{k\times k}
\end{equation}
holds for $P=P_*$
and $\Lambda=P_*^{\HH}\scrH(P_*)\succeq 0$,
where $ \scrH(P)$
is the derivative in~\eqref{eq:scrH-NPDo}.
\end{theorem}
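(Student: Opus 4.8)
The plan is to derive the KKT system directly from the NPDo Ansatz \eqref{eq:f(P)-Ansatz} together with the Stiefel constraint, rather than invoking the general Lagrangian machinery. First I would recall that the feasible set is the Stiefel manifold $\STM{k}{n}=\{P:P^{\HH}P=I_k\}$ and write the standard first-order optimality condition there: the Riemannian gradient of $f$ vanishes at $P_*$, which is equivalent to the Euclidean gradient $\scrH(P_*)$ lying in the normal space of $\STM{k}{n}$ at $P_*$. Since the normal space at $P_*$ consists precisely of matrices of the form $P_*\Lambda$ with $\Lambda=\Lambda^{\HH}\in\bbC^{k\times k}$ (the symmetry coming from the constraint $P^{\HH}P=I_k$ being Hermitian-matrix-valued), this immediately yields $\scrH(P_*)=P_*\Lambda$ for some Hermitian $\Lambda$. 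Left-multiplying by $P_*^{\HH}$ and using $P_*^{\HH}P_*=I_k$ gives the explicit formula $\Lambda=P_*^{\HH}\scrH(P_*)$, and this matrix is automatically Hermitian because $\Lambda^{\HH}=\big(P_*^{\HH}\scrH(P_*)\big)^{\HH}$ equals $P_*^{\HH}\scrH(P_*)$ once one checks $\scrH(P)$ is ``symmetric'' in the appropriate sense; in fact, from \eqref{eq:scrH-NPDo-2} each summand $2s_i\omega_i A_iP_i(P_i^{\HH}A_iP_i)^{s_i-1}J_i^{\T}$ contracted with $P^{\HH}$ on the left produces $2s_i\omega_i (P^{\HH}A_iP_i)(P_i^{\HH}A_iP_i)^{s_i-1}J_i^{\T}$, and using $P_i=PJ_i$ one sees $P^{\HH}\scrH(P)=\sum_i 2s_i\omega_i J_i(P_i^{\HH}A_iP_i)^{s_i}J_i^{\T}$, manifestly Hermitian. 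This establishes \eqref{eq:KKT} with Hermitian $\Lambda$.

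The remaining and genuinely new content of the theorem is the \emph{semidefiniteness} $\Lambda=P_*^{\HH}\scrH(P_*)\succeq 0$, and this is where the NPDo Ansatz does the real work. Here I would argue as follows. Fix the maximizer $P_*$ and let $\what P$ range over $\STM{k}{n}$. Optimality gives $f(\what P)\le f(P_*)$, while \eqref{eq:f(P)-Ansatz} gives $f(\what P)-f(P_*)\ge \Re\tr(\what P^{\HH}\scrH(P_*))-\tr(P_*^{\HH}\scrH(P_*))$. Combining the two, for every $\what P\in\STM{k}{n}$,
\begin{equation*}
\Re\tr\big(\what P^{\HH}\scrH(P_*)\big)\le \tr\big(P_*^{\HH}\scrH(P_*)\big)=\tr(\Lambda).
\end{equation*}
Now substitute $\what P=P_*Q$ for an arbitrary unitary $Q\in\bbC^{k\times k}$ (note $P_*Q\in\STM{k}{n}$ since $(P_*Q)^{\HH}(P_*Q)=Q^{\HH}Q=I_k$); then $\Re\tr(Q^{\HH}P_*^{\HH}\scrH(P_*))=\Re\tr(Q^{\HH}\Lambda)\le \tr(\Lambda)$ for all unitary $Q$. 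Since $\Lambda$ is Hermitian with eigendecomposition $\Lambda=V\Theta V^{\HH}$, choosing $Q=V\,\diag(\pm 1)\,V^{\HH}$ (or more simply $Q=-I$, then $Q=\mathrm{diag}(\ldots)$ in the eigenbasis) forces each eigenvalue of $\Lambda$ to be nonnegative: taking $Q$ to flip the sign of a single eigendirection yields $\tr(\Lambda)-2\theta_j\le\tr(\Lambda)$, hence $\theta_j\ge 0$. Therefore $\Lambda\succeq 0$.

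The main obstacle — or rather, the point requiring the most care — is justifying that the feasible-direction/perturbation argument is legitimate: one must ensure that the perturbations $\what P=P_*Q$ used to extract semidefiniteness genuinely stay on $\STM{k}{n}$ (they do, as checked above) and that the inequality \eqref{eq:f(P)-Ansatz} is applied in the correct direction. A secondary subtlety is the passage from ``$\Re\tr(Q^{\HH}\Lambda)\le\tr\Lambda$ for all unitary $Q$'' to ``$\Lambda\succeq 0$'': this is a clean fact (the maximum of $\Re\tr(Q^{\HH}\Lambda)$ over unitary $Q$ equals the sum of the singular values of $\Lambda$, attained at $Q=I$ only when $\Lambda\succeq 0$), but it should be spelled out rather than asserted. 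Everything else — the formula $\Lambda=P_*^{\HH}\scrH(P_*)$ and its Hermiticity — is routine linear algebra using $P_i=PJ_i$ and the trace-cyclicity already exploited in the proof of \Cref{thm:f(P)-Ansatz}. An alternative to the unitary-$Q$ trick, which I would mention as a cross-check, is to use curves $\what P(t)=P_*\exp(tS)$ with $S=-S^{\HH}$ skew-Hermitian and examine the second-order behavior, but the global inequality \eqref{eq:f(P)-Ansatz} makes the finite-perturbation argument cleaner and avoids any smoothness bookkeeping.
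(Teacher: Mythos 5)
Your proposal is essentially correct and, in its core, matches the argument behind the cited result: the real content is that optimality of $P_*$ combined with the Ansatz \eqref{eq:f(P)-Ansatz} forces $\Re\tr\big(\what P^{\HH}\scrH(P_*)\big)\le\tr\big(P_*^{\HH}\scrH(P_*)\big)$ for all $\what P\in\STM{k}{n}$, i.e., $P_*$ solves the orthogonal Procrustes problem \eqref{eq:scfopt} with its own gradient, and the solutions of that problem are exactly the unitary polar factors, giving $\scrH(P_*)=P_*\Lambda$ with $\Lambda\succeq 0$. The paper itself does not reprove this (it defers to \cite[Theorem~3.1]{li:2024} and the Procrustes characterization discussed around \eqref{eq:scfopt}); your variant, which first gets Hermitian $\Lambda$ from the normal-space form of the multiplier and then extracts $\Lambda\succeq 0$ by testing $\what P=P_*Q$ with $Q=V\diag(\pm1)V^{\HH}$ in the eigenbasis of $\Lambda$, is a correct and self-contained way to reach the same conclusion. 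Note that invoking the full Procrustes characterization would give you \eqref{eq:KKT} and $\Lambda\succeq 0$ in one stroke, without needing the Riemannian first-order condition at all.

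One genuine error, though harmless to the logic: your claimed identity
$P^{\HH}\scrH(P)=\sum_i 2s_i\omega_i\,J_i\big(P_i^{\HH}A_iP_i\big)^{s_i}J_i^{\T}$
is false. From \eqref{eq:scrH-NPDo-2} one gets $P^{\HH}\scrH(P)=\sum_i 2s_i\omega_i\,\big(P^{\HH}A_iP_i\big)\big(P_i^{\HH}A_iP_i\big)^{s_i-1}J_i^{\T}$, and replacing $P^{\HH}A_iP_i$ by $J_i\,P_i^{\HH}A_iP_i=J_iJ_i^{\T}P^{\HH}A_iP_i$ would require the rows of $P^{\HH}A_iP_i$ outside the index range of $J_i$ to vanish, which they do not in general; indeed $P^{\HH}\scrH(P)$ is \emph{not} Hermitian at an arbitrary $P$ (if it were, every $P\in\STM{k}{n}$ would satisfy the first-order condition). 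You do not need this computation: once the normal-space argument gives $\scrH(P_*)=P_*S$ with $S=S^{\HH}$, left-multiplying by $P_*^{\HH}$ yields $P_*^{\HH}\scrH(P_*)=S$, so the Hermiticity of $\Lambda=P_*^{\HH}\scrH(P_*)$ at the stationary point is automatic. I recommend deleting the ``manifestly Hermitian'' computation and keeping the rest as is.
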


The KKT conditions~\eqref{eq:KKT}
--
derived from standard analysis for optimization problems
with orthogonality constraints~\cite{abms:2008,li:2024}
--
provide first-order (stationary)
condition for the optimization~\eqref{eq:OptOnSTM-master}.
The key feature of~\Cref{thm:maximizer}
is the requirement that $\Lambda\succeq 0$ at the maximizer $P_*$ of
\eqref{eq:OptOnSTM-master}.
Since $\scrH(P_*)= P_*\Lambda$ and $\Lambda\succeq 0$,
it follows that $P_*\Lambda$ is a polar decomposition of $\scrH(P_*)$,
with $P_*$ being a unitary polar factor~\cite{govl:2013}.
Because $\scrH(\cdot)$ depends on $P$ nonlinearly,
equation~\eqref{eq:KKT} with $\Lambda\succeq 0$ is called a {\em nonlinear polar decomposition with
orthogonal\footnote {Strictly speaking, the term should be ``unitary'',
	but to preserve the abbreviation ``NPDo'' in \cite{li:2024}, we keep the word ``orthogonal'' here.  }
polar factor dependency\/} (NPDo) of $\scrH(\cdot)$ \cite{li:2024}.
Note that $\scrH(P_*)$ has a unique polar decomposition if
$\rank(\scrH(P_*))=k$ \cite{li:1995},
but not if $\rank(\scrH(P_*))<k$ \cite{high:2008,li:1993b,li:2014HLA}.

\subsection{An Algorithm for NPDo}
\Cref{thm:maximizer} provides the theoretical foundation of
the NPDo approach for optimization~\eqref{eq:OptOnSTM-master}
and naturally leads to the following fixed-point-type iteration for computing a maximizer:
\begin{equation}\label{eq:scfits}
\scrH(P^{(j)})=P^{(j+1)}\Lambda_j\quad\mbox{for $j=0,1,\ldots$},
\end{equation}
where $P^{(j+1)}\in\STM{k}{n}$ and $ \Lambda_j\succeq 0$;
that is, $P^{(j+1)}\Lambda_j$ is a polar decomposition of $\scrH(P^{(j)})$.
This iterative scheme resembles the well-known Self-Consistent-Field
(SCF) iterations for nonlinear eigenvector problems
and is referred to as the NPDoSCF \cite[Algorithm 3.1]{li:2024}.  

Alternatively,
in terms of the optimization problem~\eqref{eq:OptOnSTM-master},
each SCF step in~\eqref{eq:scfits} can also be interpreted as a local optimal
search based on~\Cref{thm:f(P)-Ansatz}.
In particular, given the current iterate $P^{(j)}$,
it follows from the lower bound~\eqref{eq:f(P)-Ansatz} that,
for all $P\in\STM{k}{n}$,
\begin{equation}\label{eq:scfincrease}
	f(P) - f(P^{(j)}) \geq
	\Re\tr\left(P^{\HH}\scrH(P^{(j)})\right) -
	\tr\left((P^{(j)})^{\HH}\scrH(P^{(j)})\right).
\end{equation}
Therefore, to achieve a large increase in the objective value $f(P)$,
we aim to maximize the right hand side over
$P\in\STM{k}{n}$, leading to the next iterate
\begin{equation}\label{eq:scfopt}
	P^{(j+1)} = \argmax_{P\in \STM{k}{n}} \Re\tr(P^{\HH}\scrH(P^{(j)})).
\end{equation}
This optimization subproblem also arises in the classical
orthogonal Procrustes problem in matrix analysis,
whose optimal solution is given by the
unitary polar factor of $\scrH(P^{(j)})$ (see, e.g., \cite{govl:2013}, \cite[Lemma~B.9]{li:2024}).
Thus,~\eqref{eq:scfopt} recovers the SCF iteration in~\eqref{eq:scfits}.

The optimization interpretation~\eqref{eq:scfopt} also yields a
desirable monotonicity property that underpins the global convergence of the
iteration~\eqref{eq:scfits}:
\begin{equation}
	f(P^{(j+1)})\geq f(P^{(j)}) \quad \text{for $j=0,1,\dots$},
\end{equation}
which follow from the fact that the right hand side of~\eqref{eq:scfincrease} with $P=P^{(j+1)}$
is always non-negative due to~\eqref{eq:scfopt}.
Moreover,
the inequality $f(P^{(j+1)})> f(P^{(j)})$ is strict
unless $\scrH(P^{(j)})=P^{(j)}\Lambda_j$ with $\Lambda_j\succeq 0$,
in which case $P^{(j)}$ already satisfies the fixed-point equation;
see also~\cite[Lemma~B.9]{li:2024}.
This monotonicity immediately implies the following global convergence result:
the sequence $\{f(P^{(j)})\}_{j=0}^{\infty}$
is monotonically increasing and convergent,
and any accumulation point $P_*$ of $\{P^{(j)}\}_{j=0}^{\infty}$
satisfies the KKT conditions~\eqref{eq:KKT} and hence yields a polar decomposition of $\scrH(P_*)$.
This result follows from the general convergence
theorems~\cite[Theorems~3.2~and~3.3]{li:2024},
and we omit its formal statement here for brevity.

\begin{algorithm}[t]
\caption{The NPDo approach for solving \eqref{eq:OptOnSTM-master}.}
\label{alg:master-NPDo}
\begin{algorithmic}[1]
\REQUIRE
    Hermitian $A_i\in\bbC^{n\times n}$
	satisfying $A_i\succeq 0$,
	scalars $\omega_i\geq 0$,
	and integers $s_i\geq 1$,
	for $i=1,\dots,M$,
	and an initial $P^{(0)}\in\STM{k}{n}$;
\ENSURE  an approximate maximizer of \eqref{eq:OptOnSTM-master}.
\FOR{$j=0,1,\ldots$ until convergence}
    \STATE compute $H_j=\scrH(P^{(j)})\in\bbC^{n\times k}$ by \eqref{eq:scrH-NPDo})
	and its thin SVD: $H_j=U_j\Sigma_jV_j^{\HH}$;
    \STATE $P^{(j+1)}=U_jV_j^{\HH}\in\STM{k}{n}$,
           the unitary polar factor of  $\scrH(P^{(j)})$;
\ENDFOR
\RETURN the last $P^{(j)}$.
\end{algorithmic}
\end{algorithm}

A computational process based on~\eqref{eq:scfits} is summarized
in~\Cref{alg:master-NPDo},
which is essentially  NPDoSCF~\cite[Algorithm 3.1]{li:2024} specialized to the current case.
A couple of comments on the implementation
are in order.
\begin{enumerate}[(i)]
  \item Line 2 requires computing the thin SVD of $\scrH(P)$,
	  which can be expensive when $n$ is large and $k$ is comparable to $n$.
        Fortunately, for a large-scale problem, where $n$ is very large,
		it is often sufficient to compute only a few most significant
		common principal components,
        i.e., the optimizer $P$ typically involves a small number $k$ of
		columns.
		In such cases, the thin SVD\footnote {
			The thin SVD, $H_j=U_j\Sigma_jV_j^{\HH}$ with
			$U_j\in\STM{k}{n}$ and $V_j\in\STM{k}{k}$,
            can be computed in three steps:
			(1) compute a thin QR decomposition $H_j=WR$ with $R\in\bbC^{k\times k}$,
			then (2) the SVD of $R$: $R=\what U\Sigma_j V_j^{\HH}$;
			(3) and finally $H_j=(W\what U)\Sigma_j V_j^{\HH}$, the thin SVD of $H_j$.
            Hence the overall cost per SCF iterative step, stemming from the SVD of $R$ and three matrix products of
                an $n$-by-$k$ matrix with an $k$-by-$k$ matrix, is about $6nk^2+20k^3$ flops \cite[p.493]{govl:2013}
                which is linear in $n$ for small $k$.}
        of $H_j$ presents little computational challenges.
  \item A reasonable stopping criterion at Line 1 is
         \begin{equation}\label{eq:stop-1}
         \varepsilon_{\KKT}:=\frac {\big\|\scrH(P)-P\sym\big(P^{\HH}\scrH(P)\big)\big\|_{\F}}
                 {2\sum_{i=1}^Ms_i\omega_i\|A_i\|_{\F}\|A_i\|_2^{s_i-1}}
               \le\epsilon,
         \end{equation}
         where $\epsilon$ is a given tolerance, and
         $\sym(C):=(C+C^{\HH})/2$ takes the Hermitian part of a square matrix $C$.
		 Here, the spectral norm $\|A_i\|_2$ can be expensive to compute,
		 especially when $A_i$ is of a large size. Fortunately for  normalization purpose some rough estimate
         is good enough, e.g., replacing $\|A_i\|_2$ with $\|A_i\|_1$.
		 In practice, it may be approximated by a more accessible norm,
		 such as the matrix 1-norm $\|A_i\|_1$.
		 Another option is to apply the symmetric Lanczos method
		 \cite{demm:1997,li:2010,parl:1998}, which often
		 yields a good estimate of $\|A_i\|_2$
		 after only a few iterations~\cite{zhli:2011}.
\end{enumerate}

\subsection{Acceleration with LOCG}\label{ssec:accNPDo}
 \Cref{alg:master-NPDo} requires a thin SVD of an  $n\times k$ matrix in
 each iteration, which can be computationally expensive for large-scale problems.
When $k\ll n/3$, the algorithm can be accelerated using
subspace methods based on the locally optimal conjugate gradient (LOCG) technique.
LOCG, widely used in matrix eigenvalue computations,
has also been applied to NPDo in prior works~\cite{li:2024,wazl:2022a},
where it was shown to be highly efficient.
Below, we outline its main idea and implementation for solving
the optimization problem~\eqref{eq:OptOnSTM-master}.

LOCG is an iterative subspace search scheme that constructs the new
iterate from a three-term subspace, formed by the current and previous
iterates, together with the gradient.
Specifically, let $P\in\STM{k}{n}$ be the current approximate maximizer
of~\eqref{eq:OptOnSTM-master},
and let $P^{(-1)}\in\STM{k}{n}$ be the previous iterate.
The next iterate $P^{(1)}$, along the line of LOCG, is obtained by solving
\begin{equation}\label{eq:LOCG}
P^{(1)}=\argmax_{Y\in\STM{k}{n}}f(Y)\,\,\,\mbox{s.t.}\,\,\, \cR(Y)\subseteq\cR([P,\scrR(P),P^{(-1)}]),
\end{equation}
where $\cR(\cdot)$ denotes the subspace  spanned by the columns of a matrix, and
\begin{equation}\label{eq:R(P)}
\scrR(P):=\grad f_{|{{\STM{k}{n}}}}(P)=\scrH(P)-P\cdot\sym\big(P^{\HH}\scrH(P)\big).
\end{equation}
For the first iteration, $P^{(-1)}$ is absent,
so the subspace in~\eqref{eq:LOCG} reduces to $\cR([P,\scrR(P)])$.

To solve the optimization problem~\eqref{eq:LOCG}, let
$W\in\STM{m}{n}$
be an orthonormal basis of the search subspace:
\begin{equation}\label{eq:crw}
	\cR(W) = \cR([P,\scrR(P),P^{(-1)}]).
\end{equation}
Typically, the matrix $[P,\scrR(P),P^{(-1)}]$ has a full column rank,
yielding a dimension $m=3k$ in $W$; though rank deficiency may occur,
leading to $m<3k$.
Now, since the columns of the solution $Y$
in \eqref{eq:LOCG} lie in the subspace $\cR(W)$,
we can write
\begin{subequations}\label{eq:LOCGsub}
\begin{equation}\label{eq:LOCGsub:Y}
Y=WZ\quad\mbox{for some $Z\in\STM{k}{m}$}.
\end{equation}
Substituting $Y=WZ$ into~\eqref{eq:LOCG} yields
$P^{(1)}=WZ_{\opt}$, where $Z_{\opt}$ is obtained by solving
\begin{equation}\label{eq:LOCGsub-1}
Z_{\opt}=\argmax_{Z\in\STM{k}{m}}
\left\{\wtd f(Z) := f(WZ) \equiv
\sum_{i=1}^M\omega_i\tr\big([Z_i^{\HH}\wtd A_iZ_i]^{s_i}\big) \right\},
\end{equation}
with the coefficient matrices $\wtd A_i:=W^{\HH}A_iW\in\bbC^{m\times m}$
and variables $Z_i=ZJ_i^{\HH}$, analogous to the submatrices
$P_i=PJ_i^{\T}$ of $P$,
for $i=1,\dots,M$.
\end{subequations}

Observe that the optimization problem~\eqref{eq:LOCGsub-1}
share the same form of the objective function as the original
problem~\eqref{eq:OptOnSTM-master},
but involves smaller coefficient matrices $\{\wtd A_i\}_{i=1}^M$
of size $m\times m$, where $m\ll n$ if $k\ll n/3$.
Consequently, \Cref{alg:master-NPDo} applied to solve this
reduced problem incurs significantly lower computational
cost per iteration compared to the full-size problem,
thereby providing an acceleration.

%

\begin{algorithm}[t]
\caption{LOCG-accelerated NPDo for solving~\eqref{eq:OptOnSTM-master} (assuming $k<n/3$)}
\label{alg:JBDvLOCG}
\begin{algorithmic}[1]
\REQUIRE
    Hermitian $A_i\in\bbC^{n\times n}$
	satisfying $A_i\succeq 0$,
	scalars $\omega_i\geq 0$,
	and integers $s_i\geq 1$,
	for $i=1,\dots,M$,
	and an initial $P^{(0)}\in\STM{k}{n}$;

\ENSURE  an approximate maximizer of \eqref{eq:OptOnSTM-master}.
\STATE $P^{(-1)}=[\,]$; \% null matrix;
\FOR{$j=0,1,\ldots$ until convergence}
		   \label{i:alg:JBDvLOCG:for}
    \STATE compute
	$W\in\STM{m}{n}$
	such that $\cR(W)=\cR([P^{(j)},\scrR(P^{(j)}),P^{(j-1)}])$ as in \eqref{eq:W-compute}, where
           $\scrR(P^{(j)})$ is calculated according to \eqref{eq:R(P)};
		   \label{i:alg:JBDvLOCG:orth}
    \STATE solve \eqref{eq:LOCGsub-1} for $Z_{*}$ by \Cref{alg:master-NPDo}
	with $\wtd\scrH(\cdot)$ in \eqref{eq:wtd-scrH-NPDo} and
	with initial $Z^{(0)}$ being the first $k$ columns of $I_m$;
		   \label{i:alg:JBDvLOCG:red}
    \STATE $P^{(j+1)}=WZ_{\opt}$;
		   \label{i:alg:JBDvLOCG:pj}
\ENDFOR
		   \label{i:alg:JBDvLOCG:endfor}
\RETURN the last $P^{(i)}$.
\end{algorithmic}
\end{algorithm}

\Cref{alg:JBDvLOCG} outlines the LOCG-accelerated
version of \Cref{alg:master-NPDo},
where
\begin{equation}\label{eq:wtd-scrH-NPDo}
\wtd\scrH(Z):=\frac {\partial \wtd f(Z)}{\partial Z}
   =\sum_{i=1}^M2s_i\omega_i\wtd A_iZ_i(Z_i^{\HH}\wtd
   A_iZ_i)^{s_i-1}J_i^{\T}\in\bbC^{m\times k}.
\end{equation}
A few  comments on the implementation of \Cref{alg:JBDvLOCG} are in order.
\begin{enumerate}[(i)]

  \item At Line~\ref{i:alg:JBDvLOCG:for},
	  we can use the same stopping criterion as given in~\eqref{eq:stop-1}.

	\item
		At Line~\ref{i:alg:JBDvLOCG:orth}, an orthogonalization process
		is required to obtain $W$.
		By definition, the orthonormal basis matrix $W$ for the search subspace
		in~\eqref{eq:crw} can be computed
		by orthogonalizing the columns of the matrix
		$[P,\scrR(P),P^{(-1)}]$,
		e.g., using the Gram-Schmidt process.
		Since $P\in\STM{n}{k}$ is already orthonormal,
		the orthogonalization  can begin with the remaining blocks $[\scrR(P),P^{(-1)}]$.
		In MATLAB, to fully take advantage of its optimized built-in functions,
		one may set $W=[\scrR(P),P^{(-1)}]$ (or $W=\scrR(P)$ for the
		first iteration), and execute
	\begin{equation}\label{eq:W-compute}
	\framebox{
	\begin{minipage}{9cm}
	\tt
	W=W-P*(P'*W); W=orth(W);     \% 1st Gram-Schmidt \\
	W=W-P*(P'*W); W=orth(W);     \% 2nd Gram-Schmidt \\
		  W=[P,W];
	\end{minipage}
	}
	\end{equation}
	where {\tt orth} is MATLAB's  orthogonalization
	function\footnote{MATLAB's {\tt orth} is based on the thin SVD.
		Another option is to use the thin {\tt qr}: {\tt [W,$\sim$]=qr(W,0)}.}.
	The first two lines above apply the classical Gram-Schmidt orthogonalization twice
   to ensure that the columns of the resulting $W$ are orthogonal to
   those of $P$ to nearly machine precision.
   Note that in the last line, the final matrix $W$ has its first $k$ columns identical to $P$.

  \item
	  At Line~\ref{i:alg:JBDvLOCG:red}, the initial $Z^{(0)}$ is
	  always set as the first $k$ columns of $I_m$.
	  This choice aligns with the orthogonalization process~\eqref{eq:W-compute}
	  used in Line~\ref{i:alg:JBDvLOCG:orth} for constructing the
	  projection basis matrix $W$, such that the
	  first $k$ columns of $W$ coincide with $P^{(j)}$.
	  As the iteration converges, $P^{(j+1)}$ becomes increasingly close
	  to $P^{(j)}$,
	  and consequently, the optimal $Z_{\opt}\equiv W^{\HH} P^{(j+1)}$
	  is increasingly close to the first $k$ columns of $I_m$.


  \item
	  Two additional improvements for solving the reduced problem in Line~\ref{i:alg:JBDvLOCG:red} are
	  as follows.

	  First, computational savings can be made to construct the reduced
	  $\wtd\scrH(\cdot)$
	  by reusing previously computed qualities.
	  Particularly, we may compute for the next iteration
		$$
		A_{\ell}P^{(j+1)}=(A_{\ell}W)Z_{\opt}
		\quad\text{and}\quad
		[P^{(j+1)}]^{\HH}A_{\ell}P^{(j+1)}=Z_{\opt}^{\HH}(W^{\HH}A_{\ell}W)Z_{\opt}
		$$
		using the precomputed matrices
		$(A_{\ell}W)$ and $(W^{\HH}A_{\ell}W)$,
		reducing the costs to  $O(nk^2)$ and $O(k^3)$,
		respectively, instead of $O(n^2k)$ from direct computation.


		Second, adaptive error tolerance may be used  when solving the
		reduced problem~\eqref{eq:LOCGsub-1} in each iteration.
        Suppose the error tolerance $\epsilon$ is used to stop the outer-loop
		(Lines~\ref{i:alg:JBDvLOCG:for} -- \ref{i:alg:JBDvLOCG:endfor})
		according to~\eqref{eq:stop-1}.
		Then, instead of using the same $\epsilon$ in
		\Cref{alg:master-NPDo} 
		for the reduced problem~\eqref{eq:LOCGsub-1},
		one may use a larger stopping tolerance as given by, e.g.,
		a fraction (say $1/4$) of the relative residual $\varepsilon_{\KKT}$
		evaluated at the current approximation $P=P^{(j)}$.
		The rationale is that when $P^{(j)}$ is still far from the
		solution, the corresponding reduced problem~\eqref{eq:LOCGsub-1} is only
		a rough approximation of the original one,
		and solving it by~\Cref{alg:master-NPDo} to high accuracy is unnecessary.

\end{enumerate}


\section{Principal Joint  Block Diagonalization}\label{sec:NPDo4PJBD}
In this section, we introduce the principal joint block-diagonalization (\pjbd)
problem and its solution using the NPDo approach.
To begin with,
we specify the desired block structure for a matrix of size $k$-by-$k$ by
\begin{equation}\label{eq:tau-n}
\tau_k:=(k_1,\dots,k_t),
\end{equation}
a {\em sub-partition\/} of an integer $k$,
where each $k_i\ge 1$ for $i=1,\dots, t$ 
are integers, and $k=\sum_{i=1}^t k_i$.
We define the {\em $\tau_k$-block-diagonal part\/} of
a matrix $B\in\mathbb{C}^{k\times k}$ as
\begin{equation*}
    \BDiag_{\tau_k}(B)=\diag(B_{11},\dots,B_{tt}),
\end{equation*}
where $B_{ii}\in\mathbb{C}^{k_i\times k_i}$ for $i=1,\dots,t$ are diagonal blocks
of $B$ associated with the sub-partition:
\begin{equation}\label{eq:tau-n-part}
B=\kbordermatrix{ &\sss k_1 & \sss k_2 & \sss \cdots &\sss k_t \\
         \sss k_1 & B_{11} & B_{12} & \cdots & B_{1t} \\
         \sss k_2 & B_{21} & B_{22} & \cdots & B_{2t} \\
         \sss \vdots & \vdots & \vdots &  & \vdots \\
         \sss k_t & B_{t1} & B_{t2} & \cdots & B_{tt} }.
\end{equation}
If $B=\BDiag_{\tau_k}(B)$, we refer to $B$ as a {\em
$\tau_k$-block-diagonal matrix\/}.

Given Hermitian matrices $A_{\ell}\in\bbC^{n\times n}$
for $\ell=1,\dots,N$
and a sub-partition $\tau_k$,
we consider the {\em principal joint block-diagonalization}
problem 
\begin{equation}\label{eq:opt-pjbd}
\max_{P\in\STM{k}{n}} \left\{ f(P):=\sum_{\ell=1}^N\|\BDiag_{\tau_k}(P^{\HH}A_{\ell}P)\|_{\F}^2\right\}.
\end{equation}
When $P$ is square,  i.e., $k=n$,
this optimization seeks a congruence transformation that jointly
block-diagonalize the matrices $\{A_{\ell}\}_{\ell = 1}^N$
into the block structure defined by $\tau_k$;
this is a standard joint block-diagonalization ({\jbd}) problem.
When $k<n$, the problem becomes one of {\em partial} joint block-diagonalization,
where the goal is to identify a subspace of dimension $k$ to apply {\jbd},
within which the diagonal blocks are collectively as dominant as possible.
In this sense, the optimization extracts the principal joint block-diagonal components.
We stress that the notion of the {\em partial} joint block-diagonalizer
within the principal \jbd\ problem~\eqref{eq:opt-pjbd}
has attracted little attention in the literature
and constitutes an important distinction of our formulation from prior work on {\jbd} problems.

In the rest of this section, we will explain how to solve this maximization problem with the help of
the NPDo approach  in \cref{sec:NPDo} as the working engine.
Partition $P$ columnwise, according to $\tau_k$, as
\begin{equation}\label{eq:P-part'n}
P=\kbordermatrix{ &\sss k_1 &\sss k_2 &\sss \cdots &\sss k_t \\
                  & P_1 & P_2 & \cdots & P_t},
\end{equation}
or, alternatively, $P_i=PJ_i^{\T}$ where $J_i$ for $1\le i\le t$ are from partitioning $I_k$ column-wise, also according to $\tau_k$, as
\begin{equation}\label{eq:scrH-NPDo-3}
I_k=\kbordermatrix{ &\sss k_1 &\sss k_2 &\sss \cdots &\sss k_t \\
                  & J_1 & J_2 & \cdots & J_t}.
\end{equation}
It can be seen that the objective function of \eqref{eq:opt-pjbd} can be reformulated as
\begin{equation}\label{eq:pjbd:obj-tr-1}
f(P)=\sum_{\ell=1}^N\sum_{i=1}^t\|P_i^{\HH}A_{\ell}P_i\|_{\F}^2
    =\sum_{\ell=1}^N\sum_{i=1}^t\tr\big([P_i^{\HH}A_{\ell}P_i]^2\big)
\end{equation}
which is the sum of $M:=tN$ matrix traces
$\tr\big([P_i^{\HH}A_{\ell}P_i]^2\big)$ and thus has the same form as the one in \eqref{eq:OptOnSTM-master} with  all $s_i=2$ and $\omega_i=1$,  but whether all $A_{\ell}\succeq 0$ is a question
that needs to be addressed.


Before proceeding,
we point out two special cases of \eqref{eq:opt-pjbd}:
(a)~If $\tau_k = (1,\dots,1)$,
i.e., $k_i=1$ for $i=1,\dots,t$ and $t=k$,
then the problem
reduces to finding the most dominant partial joint diagonalization;
(b)~If $\tau_k = (k)$,
i.e., $k_1=k$ and $t=1$,
then it corresponds to seeking
the most dominant joint compression.

\begin{remark}\label{rk:fjd}
	As an illustrative example to build further intuition for the principal
	{\jbd} problem~\eqref{eq:opt-pjbd},
	we consider the ideal case where the Hermitian matrices $\{A_{\ell}\}_{\ell=1}^N$ are
	exactly jointly diagonalizable.
	In this case, there exits $Q\in\STM{n}{n}$, such that
	\[
    Q^{\HH} A_{\ell} Q = D_{\ell}\equiv\diag(a_{\ell;11},\dots,a_{\ell;nn})
	\]
	for $\ell=1,\dots, N$.
	By a suitable permutation, we can always assume the diagonal entries
	of $\{D_{\ell}\}_{\ell =1}^N$
	are in the  order dictated by
	\begin{equation}\label{eq:pkorder}
		d_1\geq d_2\geq \dots\geq d_n
	\quad\text{with}\quad
		d_i:= \sum_{\ell=1}^n a_{\ell; ii}^2.
	\end{equation}
	In this case, the first $k$ columns of $Q$,
	denoted by $Q_{1:k}$, naturally captures
	the $k$ most significant joint-diagonalization components.
	Those components in $Q_{1:k}$ consist of the eigenvectors of
	the top $k$ eigenvalues of the Hermitian matrix
\begin{equation}\label{eq:Atot}
A_{\tot} :=
	\sum_{\ell=1}^NA_{\ell}^2.
\end{equation}
	On the other hand, a quick verification shows that $Q_{1:k}$ is exactly a solution
	of the principal {\jd} problem~\eqref{eq:opt-pjbd}, i.e., with all $k_i=1$ and $t=k$.
	This follows from the observation that for any $P\in\STM{k}{n}$
\begin{subequations}\label{eq:rk:fjd-1}
\begin{equation}\label{eq:rk:fjd-1a}
		 f(P) =
		 \sum_{\ell=1}^N\tr\left([P^{\HH}A_{\ell} P]^2\right)
		 \leq
		 \sum_{\ell=1}^N\tr\left([P^{\HH}A_{\ell}^2 P\right)
		 = \tr\left( P^{\HH}A_{\tot} P\right),
\end{equation}
	 where the inequality is due to the fact that
\begin{equation}\label{eq:rk:fjd-1b}
\tr\left(P^{\HH}A_{\ell} P\cdot P^{\HH}A_{\ell} P \right) \leq
	 \tr\left(A_{\ell} P P^{\HH}A_{\ell}\right) = \tr\left( P^{\HH}A_{\ell}^2 P\right).
\end{equation}
\end{subequations}
It is known, by Ky Fan's trace minimization/maximization principle \cite{fan:1949,liwz:2023}, that
$\tr\left( P^{\HH}A_{\ell}^2 P\right)$ is maximized when
	 $P=Q_{1:k}$ consists of the eigenvectors associated with the $k$ largest eigenvalues of
	 $A_{\tot}$, at which the equalities in \eqref{eq:rk:fjd-1} also hold due to simultaneous
	 diagonalization of $\{A_{\ell}\}_{\ell=1}^N$ by  $Q_{1:k}$.
	 Consequently, when $\{A_{\ell}\}_{\ell=1}^N$ are approximately  jointly diagonalizable,
	 we may still expect that, approximately, the principal {\jbd} problem~\eqref{eq:opt-pjbd}
	 yields a partial diagonalizer closely aligned with those from
	 an approximate full diagonalizer $P$,
	 ordered according to~\eqref{eq:pkorder}.
	 Yet we can avoid the full {\jd} to obtain such a partial diagonalizer for numerical efficiency.
\end{remark}

\subsection{Positive Semidefinite Case}\label{sec:NPDo4PJBD_spd}
Suppose that  $A_{\ell} \succeq 0$ for $\ell=1,\dots,N$.
Via reformulation~\eqref{eq:pjbd:obj-tr-1}, 
the NPDo approach described in~\cref{sec:NPDo} applies
directly to the \pjbd\ problem~\eqref{eq:opt-pjbd}.
In particular, for \Cref{alg:master-NPDo},
the corresponding $\scrH$ of \eqref{eq:scrH-NPDo} is
\begin{equation}\label{eq:scrH-pjbd4PSD}
\scrH(P):=\frac {\partial f(P)}{\partial P}
   =4\sum_{\ell=1}^N\Big[ A_{\ell}P_1(P_1^{\HH} A_{\ell}P_1),\ldots, A_{\ell}P_t(P_t^{\HH} A_{\ell}P_t)\Big],
\end{equation}
and for \Cref{alg:JBDvLOCG}, the corresponding $\wtd\scrH$ of \eqref{eq:wtd-scrH-NPDo}
is given by
\begin{equation}\label{eq:wtd-scrH-pjbd4PSD}
\wtd\scrH(Z):=\frac {\partial\wtd f(Z)}{\partial Z}
   =4\sum_{\ell=1}^N\Big[ \wtd A_{\ell}Z_1(Z_1^{\HH} \wtd A_{\ell}Z_1),\ldots,
        \wtd A_{\ell}Z_t(Z_t^{\HH} \wtd A_{\ell}Z_t)\Big].
\end{equation}

\subsection{General Hermitian Case}\label{sec:NPDo:g}
We now consider the case when some $A_{\ell}\not\succeq 0$,
which includes either $A_{\ell}\preceq 0$ or indefinite.
In this case, the trace function  $\tr\big([P_i^{\HH}A_{\ell}P_i]^2\big)$ in~\eqref{eq:pjbd:obj-tr-1}
may no longer satisfy inequality~\eqref{eq:NPDo:AF-PAP-2} in~\Cref{lm:NPDo:AF-PAP} (i.e., not an atomic function),
and hence the NPDo approach cannot be applied directly.
To address this issue, we will shift each Hermitian matrix needs to a
positive semi-definite matrix,
and the optimization problem is reformulated accordingly to admit the use
of the NPDo approach.
The details are presented below.

To begin with, suppose we have shift parameters
$\delta_{\ell}\in\bbR$, for $\ell=1,\dots,N$, such that
\begin{equation}\label{eq:bding-Aell}
\what A_{\ell}:=A_{\ell}-\delta_{\ell}I\succeq 0
\quad\mbox{for $\ell=1,\dots, N$}.
\end{equation}
Shift $\delta_{\ell}$ ca be taken as any lower bound on the eigenvalues of $A_{\ell}$, and numerically
it can be estimated rather efficiently \cite{zhli:2011}.
Reformulating each trace function
$\tr((P_i^{\HH}A_{\ell}P_i)^2)$ as
\begin{align*}
\tr((P_i^{\HH}A_{\ell}P_i)^2)&=\tr([P_i^{\HH}\what A_{\ell}P_i]^2)+2\delta_{\ell}\tr(P_i^{\HH}\what A_{\ell}P_i)+k_i\delta_{\ell}^2\\
   &=\tr([P_i^{\HH}\what A_{\ell}P_i]^2)+2\delta_{\ell}\tr(P_i^{\HH} A_{\ell}P_i)-k_i\delta_{\ell}^2,
\end{align*}
we obtain
\begin{align}
f(P)&=\sum_{\ell=1}^N\sum_{i=1}^t\tr\left([P_i^{\HH}A_{\ell}P_i]^2\right) \nonumber \\
    &=
    \sum_{\ell=1}^N\sum_{i=1}^t\tr\left([P_i^{\HH}\what A_{\ell}P_i]^2\right)
      +2\tr(P^{\HH} B P)
       -k\sum_{\ell=1}^N\delta_{\ell}^2, \label{eq:trans2PSD}
\end{align}
where
\begin{equation}\label{eq:B=sum}
B=\sum_{\ell=1}^N\delta_{\ell}\, A_{\ell}.
\end{equation}
The expression of $f(P)$ in~\eqref{eq:trans2PSD} resembles
the objective function in~\eqref{eq:OptOnSTM-master},
except that it has an extra constant term and $B$ is not necessarily positive semidefinite.
To address the indefiniteness of $B$, we treat the cases $k=n$ and $k<n$ separately.

\smallskip\noindent
{\bf Case $k=n$.} Now that $P\in\STM{n}{n}$, we have
\begin{equation}\label{eq:sum2whole}
2\tr(P^{\HH} B P)
=
2 \tr( B)
=
2 \sum_{\ell=1}^N\delta_{\ell}\tr(A_{\ell}),
\end{equation}
a constant. It follows from \eqref{eq:trans2PSD} that
\begin{equation}\label{eq:trans2PSD:k=n}
f(P)=\underbrace{\sum_{\ell=1}^N\sum_{i=1}^t\tr\left([P_i^{\HH}\what A_{\ell}P_i]^2\right)}_{=:\what f(P)}
      +\underbrace{2 \sum_{\ell=1}^N\delta_{\ell}\tr(A_{\ell})-k\sum_{\ell=1}^N\delta_{\ell}^2}_{\mbox{constant}}.
\end{equation}
Consequently, optimization problem \eqref{eq:pjbd:obj-tr-1} is
equivalent to
\begin{equation}\label{eq:opt-pjbd:trans2PSD:k=n}
\max_{P\in\STM{n}{n}}\, \what f(P).
\end{equation}
Objective function $\what f(P)$ takes the same form as $f(P)$
in~\eqref{eq:pjbd:obj-tr-1},
with all trace terms $\tr\big([P_i^{\HH}\what A_{\ell}P_i]^2\big)$
involving $\what A_{\ell}\succeq 0$.
Thus, the problem reduces to the positive semidefinite case as
discussed in subsection~\ref{sec:NPDo4PJBD_spd},
and the NPDo approach applies.

\smallskip\noindent
{\bf Case $k<n$.}
For the case, the term $\tr(P^{\HH} B P)$ is no longer a constant as in \eqref{eq:sum2whole}.
Similarly, we shift matrix $B$, using $\delta_0\in\bbR$, to
a positive semidefinite one:
$$
\what B:=B-\delta_0 I\succeq 0.
$$
It then follows from \eqref{eq:trans2PSD} that
\begin{equation}\label{eq:trans2PSD'}
f(P)=\underbrace{\sum_{\ell=1}^N\sum_{i=1}^t\tr\left([P_i^{\HH}\what A_{\ell}P_i]^2\right)
      +2\tr(P^{\HH}\what B P)}_{=:\what f(P)}
      +\underbrace{2k\delta_0-k\sum_{\ell=1}^N\delta_{\ell}^2}_{\mbox{constant}}.
\end{equation}
Again, by dropping the constant portion,
we arrive at an equivalent maximization problem to \eqref{eq:pjbd:obj-tr-1}:
\begin{equation}\label{eq:opt-pjbd:trans2PSD:k<n}
\max_{P\in\STM{k}{n}}\, \what f(P),
\end{equation}
Now the objective function $\what f(P)$ takes the same form as $f(P)$
in \eqref{eq:OptOnSTM-master}, i.e., consisting of trace terms in the
form of $\tr([P_i^{\HH}AP_i]^{s_i})$ with $A\succeq 0$ and $s_i=1$ or $2$.
Consequently, the NPDo approach applies to the optimization
problem~\eqref{eq:opt-pjbd:trans2PSD:k<n}.
In particular,
for \Cref{alg:master-NPDo}, the corresponding $\scrH$ is
\begin{equation}\label{eq:scrH-trans2PSD:k<n}
\what\scrH(P):=\frac {\partial\what f(P)}{\partial P}
   =4\sum_{\ell=1}^N\Big[\what A_{\ell}P_1(P_1^{\HH}\what A_{\ell}P_1),\ldots,\what A_{\ell}P_t(P_t^{\HH}\what A_{\ell}P_t)\Big]+4\what BP,
\end{equation}
and for \Cref{alg:JBDvLOCG}, the corresponding $\wtd\scrH$ of \eqref{eq:wtd-scrH-NPDo}
is given by
\begin{equation}\label{eq:wtd-scrH-pjbd4PSD:k<n}
\wtd\scrH(Z):=\frac {\partial\wtd f(Z)}{\partial Z}
   =4\sum_{\ell=1}^N\Big[ \wtd A_{\ell}Z_1(Z_1^{\HH} \wtd A_{\ell}Z_1),\ldots,
        \wtd A_{\ell}Z_t(Z_t^{\HH} \wtd A_{\ell}Z_t)\Big]
		+ 4\what BZ.
\end{equation}

\subsection{Complete Algorithm}
From the previous discussions, the \pjbd\
problem~\eqref{eq:opt-pjbd} can be reduced
to an optimization problem of the form~\eqref{eq:OptOnSTM-master}
and solved using the NPDo approach via
\Cref{alg:master-NPDo} or \Cref{alg:JBDvLOCG},
with appropriate matrix-valued functions $\scrH$ or $\wtd\scrH$
for the partial derivatives, depending on the cases:
\begin{enumerate}[(1)]
  \item When all $A_{\ell}\succeq 0$,
	  	either known a priori or detected by $\delta_{\ell}\ge 0$ with
		some sharp estimation $\delta_{\ell}$ of the lower bound
        of the smallest eigenvalues of $A_{\ell}$,
		then the optimization problem~\eqref{eq:opt-pjbd} takes the form
		of~\eqref{eq:OptOnSTM-master} directly via the
		reformulation in~\eqref{eq:pjbd:obj-tr-1}.

  \item Otherwise, 
	  using the shifted matrices $\what
	  A_{\ell}\succeq 0$ as in \eqref{eq:bding-Aell}, two subcases arise:
        \begin{enumerate}[(a)]
          \item If $k=n$, then problem \eqref{eq:opt-pjbd} reduces to~\eqref{eq:opt-pjbd:trans2PSD:k=n}
              with the objective function defined in \eqref{eq:trans2PSD:k=n};
          \item If $k<n$, compute $B$ as in~\eqref{eq:B=sum}
			  and $\what B=B-\delta_0I$, where $\delta_0$ is some sharp estimate of the
			  lower bound of the smallest eigenvalue of $B$.
			  Then problem~\eqref{eq:opt-pjbd} reduces to
			  \eqref{eq:opt-pjbd:trans2PSD:k<n}
              with the objective function defined in \eqref{eq:trans2PSD'}.
        \end{enumerate}
        The objective functions $\what f(P)$ for both subcases take the form of \eqref{eq:OptOnSTM-master}.
\end{enumerate}
Based on these cases, the complete algorithm for \pjbd\ via solving~\eqref{eq:opt-pjbd} is summarized in
\Cref{alg:master-NPDo:g}.

\begin{algorithm}[t]
\caption{\pjbd: Principal Joint Block Diagonalization.} \label{alg:master-NPDo:g}
\begin{algorithmic}[1]
\REQUIRE Hermitian $A_{\ell}\in\bbC^{n\times n}$
for $\ell=1,\dots, N$,
         sub-partition $\tau_k$,
         an initial $P^{(0)}\in\STM{k}{n}$.
\ENSURE  an approximate maximizer of \eqref{eq:opt-pjbd}.
\STATE set ${\tt is_{SPD}}=1$ if it is known that all $A_{\ell}\succeq 0$ upon entry, and $0$ otherwise;
\STATE if ${\tt is_{SPD}}=0$, estimate a 
lower bound $\delta_{\ell}$ of the smallest eigenvalues of $A_{\ell}$ for all $\ell$;
\STATE if all $\delta_{\ell}\ge 0$, then reset ${\tt is_{SPD}}=1$;
\IF{${\tt is_{SPD}}=1$}
   \STATE call \Cref{alg:master-NPDo}
          or \Cref{alg:JBDvLOCG} to solve \eqref{eq:opt-pjbd} upon noting \eqref{eq:pjbd:obj-tr-1};
\ELSE
   \STATE compute $\what A_{\ell}=A_{\ell}-\delta_{\ell} I$;
   \IF{$k=n$}
       \STATE call \Cref{alg:master-NPDo} or \Cref{alg:JBDvLOCG} to
	   solve \eqref{eq:opt-pjbd:trans2PSD:k=n};
   \ELSE
       \STATE compute $B$ by \eqref{eq:B=sum}, estimate a
	   lower bound $\delta_0$ of the smallest eigenvalue of $B$, and set $\what B=B-\delta_0 I$;
       \STATE call \Cref{alg:master-NPDo}
              or \Cref{alg:JBDvLOCG} to solve
			  \eqref{eq:opt-pjbd:trans2PSD:k<n}; 
   \ENDIF
\ENDIF
\RETURN the computed $P^{(j)}$ by \Cref{alg:master-NPDo} or \Cref{alg:JBDvLOCG}, whichever is called.
\end{algorithmic}
\end{algorithm}

As a practical consideration,
when it is not known in advance whether all Hermitian matrices
$A_{\ell}$ are positive semidefinite,
\Cref{alg:master-NPDo:g} requires an efficient method to determine
which matrices satisfy $A_{\ell}\succeq 0$.
This is done by estimating a lower bound
$\delta_{\ell}\leq \lambda_{\min}(A_{\ell})$ for
the smallest eigenvalue  of the Hermitian matrix $A_{\ell}$,
using, e.g., the inexpensive sharp estimate described in~\cite{zhli:2011}.
Clearly,  if $\delta_{\ell}\geq 0$, then $A_{\ell}\succeq 0$.
However, we should mention that if the lower bound $\delta_{\ell}< 0$,
then it is impossible to conclude
whether the smallest eigenvalue $\lambda_{\min}(A_{\ell})$ is positive,
and thus $A_{\ell}$ may or may not be positive semidefinite.
In such cases, we can always shift the matrix to a positive
semidefinite $\what A_{\ell} = A-\delta I \succeq 0$ and proceed with the
equivalent formulation of $f(P)$ in~\eqref{eq:trans2PSD}, allowing
the NPDo approach to be applied as described in~\cref{sec:NPDo:g}.
Therefore, \Cref{alg:master-NPDo:g} tolerates the cases where
$A_{\ell}\succeq 0$ is misclassified due to a negative lower bound $\delta_{\ell} <0$.

\section{Numerical Demonstrations}\label{sec:egs}

We present numerical experiments to illustrate the effectiveness of
our proposed NPDo approach for \pjbd,
which include two particular variants:
\begin{itemize}
	\item
	\Cref{alg:master-NPDo:g} combined with \Cref{alg:master-NPDo}, referred to as NPDo, and
	\item
	\Cref{alg:master-NPDo:g} combined with \Cref{alg:JBDvLOCG}, referred to as accNPDo.
\end{itemize}
We make sure to use the same initial $P^{(0)}$ is used for each testing set $\{A_{\ell}\}_{\ell=0}^N$.
For comparison, in the special cases where each block size is 1-by-1
-- that is,  joint diagonalization (\jd) or principal joint diagonalization (\pjd) (when $k<n$)
--
we also include two existing methods as mentioned in~\cref{sec:intro}:
\begin{itemize}
  \item Jacobi's method as implemented by Cardoso and Souloumiac\footnote{\tt https://www2.als.lbl.gov/als\_physics/csteier/uspas12/compuclass/tuesday/ICA/mfile/}, and
  \item the FG-algorithm implemented by W. Gander\footnote{\tt
	  https://www.unige.ch/$\sim$gander/FG.php}.
\end{itemize}
As a common basis to measure accuracy of computed solutions from each method, we will report
their normalized KKT residuals:
\begin{equation}\label{eq:stop-JBD}
\varepsilon_{\KKT}:=\frac {\big\|\scrH(P)-P\sym\big(P^{\HH}\scrH(P)\big)\big\|_{\F}}
                 {4\sum_{\ell=1}^N\|A_{\ell}\|_{\F}\|A_{\ell} \|_2},
\end{equation}
where the normalization quantity in the denominator differs from the one in \eqref{eq:stop-1}, if straightforwardly applied,
because of the special form of $\scrH(P)$ in \eqref{eq:scrH-pjbd4PSD}, for which
$$
\Big\|\Big[ A_{\ell}P_1(P_1^{\HH} A_{\ell}P_1),\ldots, A_{\ell}P_t(P_t^{\HH} A_{\ell}P_t)\Big]\Big\|_{\F}
   \le\|A_{\ell}\|_{\F}\|A_{\ell}\|_2.
$$
Note that the FG-algorithm applies only to symmetric positive semidefinite matrices.
Therefore, in experiments involving FG, we ensure that each $A_{\ell}\succeq 0$ and is real.


All experiments are carried out within the MATLAB environment (MATLAB R2022) on a Dell Precision 3660 desktop with an
Intel i9 processor (3200 Mhz), 32 GB memory, running Microsoft Windows 11 Enterprise.

Both Jacobi's method~\cite{bubm:1993,caso:1996} and the FG-algorithm~\cite{flga:1986}
compute full \jd\ (i.e., $k=n$ and all $k_i=1$).
The objective for Jacobi's method is still the one
in~\eqref{eq:opt-pjbd}, but with all $k_i=1$ and $k=t=n$,
namely
\begin{equation}\label{eq:objjacobi}
f(P):=\sum_{\ell=1}^N\|\diag(P^{\HH}A_{\ell}P)\|_{\F}^2,
\end{equation}
where $\diag(\cdot)$ extracts the diagonal of a matrix.
Jacobi's method relies on accumulative planar rotations that sweep through all
$n(n-1)/2$ off-diagonal entries of the matrices during each outer iteration.
The objective for the FG-algorithm is, instead,
\begin{equation}\label{eq:tot-devi:det}
f(P):=\prod_{\ell=1}^N\left[\frac {\det(\diag(P^{\HH}A_{\ell}P))}{\det(A_{\ell})}\right]^{\omega_{\ell}},
\end{equation}
where $\omega_{\ell}\geq 0$ are given weights. Notice that all denominators $\det(P^{\HH}A_{\ell}P)$ are actually constants and they are there more for mathematical consideration than anything else, for example, each fraction-factor in
\eqref{eq:tot-devi:det} is no bigger than $1$ and is $1$ if $P^{\HH}A_{\ell}P$ is diagonal, due to
the well-known Hadamard's inequality \cite[p.506]{hojo:2013}.
Also using planar rotations, the FG-algorithm iteratively solves the equations derived from the first order optimality conditions, in the form of $n(n-1)/2$ scalar equations \cite{flur:1984}.
For large $n$, both methods can be prohibitively expensive,
particularly when the matrices are far from being jointly diagonalizable,
where they typically require a large number of outer iterations.
Consequently, both Jacobi's method and the FG-algorithm
are only suitable for small-scale problems
(e.g., $n$ on the order of a couple of hundreds or smaller).

\smallskip\noindent
{\bf Problem Generation.}
For our experiments, we generate testing Hermitian matrices
$\{A_{\ell}\}_{\ell=1}^N$ of size $n$-by-$n$ (real or complex)
that are either
{\em not jointly diagonalizable\/} or {\em approximately jointly diagonalizable}.

For not jointly block-diagonalizable $\{A_{\ell}\}_{\ell=1}^N$  with all $A_{\ell}\succeq 0$, we simply do for $\ell=1,2,\ldots,N$
\begin{equation}\label{eq:NJD-B}
B_{\ell}={\tt randn}(n)
   \quad\mbox{or}\quad {\tt randn}(n)+{\tt 1i*}{\tt randn}(n),
\end{equation}
depending on whether {\em real\/} or {\em complex\/} matrices are used, and then
\begin{equation}\label{eq:NJD}
A_{\ell}=B_{\ell}^{\HH}B_{\ell}.
\end{equation}
For approximately jointly block-diagonalizable $\{A_{\ell}\}_{\ell=1}^N$, we first
generate a random orthogonal (or unitary) matrix $Q$ by
\begin{equation}\label{eq:aJD-1}
Q={\tt orth}({\tt randn}(n))
\quad\mbox{or}\quad
{\tt orth}({\tt randn}(n)+{\tt 1i*}{\tt randn}(n)),
\end{equation}
again depending on whether {\em real\/} or {\em complex\/} matrices are used, and then,
for $\ell=1,\dots,N$, we construct, for indefinite $A_{\ell}$,
\begin{align}
A_{\ell}&=Q^{\HH}D_{\ell}Q+\eta(B_{\ell}+B_{\ell}^{\HH}), \label{eq:aJD-2b}
\end{align}
or, for positive semidefinite $A_{\ell}$,
\begin{equation}\label{eq:JD-noJD-2}
A_{\ell}=Q^{\HH}D_{\ell}Q+\eta( B_{\ell}^{\HH}B_{\ell}),
\end{equation}
where $D_{\ell}$ is either diagonal or block-diagonal (to be specified later),
and $\eta$ is a parameter that controls the degree of being approximately jointly block-diagonalizable.
In our test, we vary $\eta$ over a range of values, e.g.,  $10^0\sim 10^{-3}$.
Evidently in \eqref{eq:JD-noJD-2}, making $D_{\ell}\succeq 0$ and $\eta\ge 0$ ensures $A_{\ell}\succeq 0$.

Dependent on whether {\em real\/} or {\em complex\/} matrices are used, associated with each testing set
$\{A_{\ell}\}_{\ell=1}$, we also generate a random initial
\begin{equation}\label{eq:P0-init}
P^{(0)}={\tt orth}({\tt randn}(n,k))
\quad\mbox{or}\quad
{\tt orth}({\tt randn}(n,k)+{\tt 1i*}{\tt randn}(n,k))
\end{equation}
to be used by both NPDo and accNPDo to begin with.

\setlength{\tabcolsep}{4pt}
\renewcommand{\arraystretch}{1.4}
\begin{table}[t]
\caption{Performance statistics for full \jd}\label{tbl:fullJD}
\centerline{\small
\begin{tabular}{|c|c|c|c|c|c|c|c|c|c|c|}
  \hline
&\multirow{2}{*}{$n$}  & \multicolumn{3}{c|}{CPU} & \multicolumn{3}{c|}{$\varepsilon_{\KKT}$ } & \multicolumn{3}{c|}{Obj} \\ \cline{3-11}
  && NPDo & Jacobi & FG& NPDo & Jacobi & FG& NPDo & Jacobi & FG \\ \hline
\multirow{4}{*}{\rotatebox{90}{real}}&\multirow{2}{*}{100}
	& $6.1{\scriptstyle (+1)}$ & $\mathbf{4.6{\scriptstyle (+1)}}$ & $2.6{\scriptstyle (+3)}$
	& $1.0{\scriptstyle (-8)}$ & $1.4{\scriptstyle (-8)}$ & $3.6{\scriptstyle (-3)}$
	& $1.4{\scriptstyle (+7)}$ & $1.4{\scriptstyle (+7)}$ & $1.2{\scriptstyle (+7)}$\\ \cline{3-11}
	&& $1.2{\scriptstyle (+0)}$ & $\mathbf{2.4{\scriptstyle (-1)}}$ & $2.4{\scriptstyle (+0)}$
	& $9.8{\scriptstyle (-9)}$ & $1.5{\scriptstyle (-8)}$ & $4.5{\scriptstyle (-4)}$
    & $3.4{\scriptstyle (+4)}$ & $3.4{\scriptstyle (+4)}$ & $3.4{\scriptstyle (+4)}$\\ \cline{2-11}
&\multirow{2}{*}{200}
    & $\mathbf{3.5{\scriptstyle (+2)}}$ & $6.8{\scriptstyle (+2)}$ & $3.4{\scriptstyle (+4)}$
	& $1.0{\scriptstyle (-8)}$ & $1.4{\scriptstyle (-8)}$ & $2.4{\scriptstyle (-3)}$
	& $1.1{\scriptstyle (+8)}$ & $1.1{\scriptstyle (+8)}$ & $9.1{\scriptstyle (+7)}$\\ \cline{3-11}
    && $5.1{\scriptstyle (+0)}$ & $\mathbf{1.6{\scriptstyle (+0)}}$ & $2.7{\scriptstyle (+1)}$
	& $1.0{\scriptstyle (-8)}$ & $1.8{\scriptstyle (-8)}$ & $5.4{\scriptstyle (-4)}$
	& $7.0{\scriptstyle (+4)}$ & $7.0{\scriptstyle (+4)}$ & $7.0{\scriptstyle (+4)}$\\
\hline
\multirow{4}{*}{\rotatebox{90}{complex}}&\multirow{2}{*}{100}
	& $\mathbf{8.3{\scriptstyle (+1)}}$ & $3.2{\scriptstyle (+2)}$ & --
	& $1.0{\scriptstyle (-8)}$ & $1.4{\scriptstyle (-8)}$ & --
	& $5.5{\scriptstyle (+7)}$ & $5.5{\scriptstyle (+7)}$ & -- \\ \cline{3-11}
	&& $2.7{\scriptstyle (+0)}$ & $\mathbf{2.0{\scriptstyle (+0)}}$ & --
	& $1.0{\scriptstyle (-8)}$ & $1.6{\scriptstyle (-8)}$ & --
	& $3.5{\scriptstyle (+4)}$ & $3.5{\scriptstyle (+4)}$ & --\\ \cline{2-11}
&\multirow{2}{*}{200}
	& $\mathbf{1.0{\scriptstyle (+3)}}$ & $1.1{\scriptstyle (+4)}$ & --
	& $1.4{\scriptstyle (-7)}$ & $9.7{\scriptstyle (-7)}$ & --
	& $4.3{\scriptstyle (+8)}$ & $4.3{\scriptstyle (+8)}$ & -- \\ \cline{3-11}
	&& $\mathbf{9.8{\scriptstyle (+0)}}$ & $1.7{\scriptstyle (+1)}$ & --
	& $1.0{\scriptstyle (-8)}$ & $1.7{\scriptstyle (-8)}$ & --
	& $7.5{\scriptstyle (+4)}$ & $7.5{\scriptstyle (+4)}$ & -- \\ \cline{1-11}
\multicolumn{11}{l}{{\scriptsize  * Each $x.y{\scriptstyle (\pm z)}$ stands for $x.y\times 10^{\pm z}$; best CPU in each row is in {\bf boldface}.}}
\end{tabular}
}
\end{table}

\subsection{Experiments on Full \jd}\label{ssec:full-jd}
We consider full {\jd} for test matrices of relatively small sizes,
$n=100$ and $200$.  The performance of the proposed NPDo approach
(via \Cref{alg:master-NPDo})
is compared with that of the existing Jacobi's method and the FG-algorithm.
Since the FG-algorithm is involved, all test matrices are generated to satisfy $A_{\ell}\succeq 0$.
Particularly, for the not-jointly-diagonalizable case, we generate $A_{\ell}\succeq 0$
using~\eqref{eq:NJD} with \eqref{eq:NJD-B},
and for the approximately jointly diagonalizable case,
we use \eqref{eq:aJD-2b} with $\eta=10^{-3}$ and
\begin{equation}\label{eq:aJD-3}
D_{\ell}=
{\tt diag}(10\cdot{\tt rand}(n,1)).
\end{equation}
Both real and complex cases are considered.

\begin{figure}[t]
{\centering
\begin{tabular}{lcc}
& approximately jointly diagonalizable & not jointly diagonalizable \\ [1ex]
\rotatebox{90}{\hspace*{1.8cm}real} &
\resizebox*{0.44\textwidth}{0.21\textheight}{\includegraphics{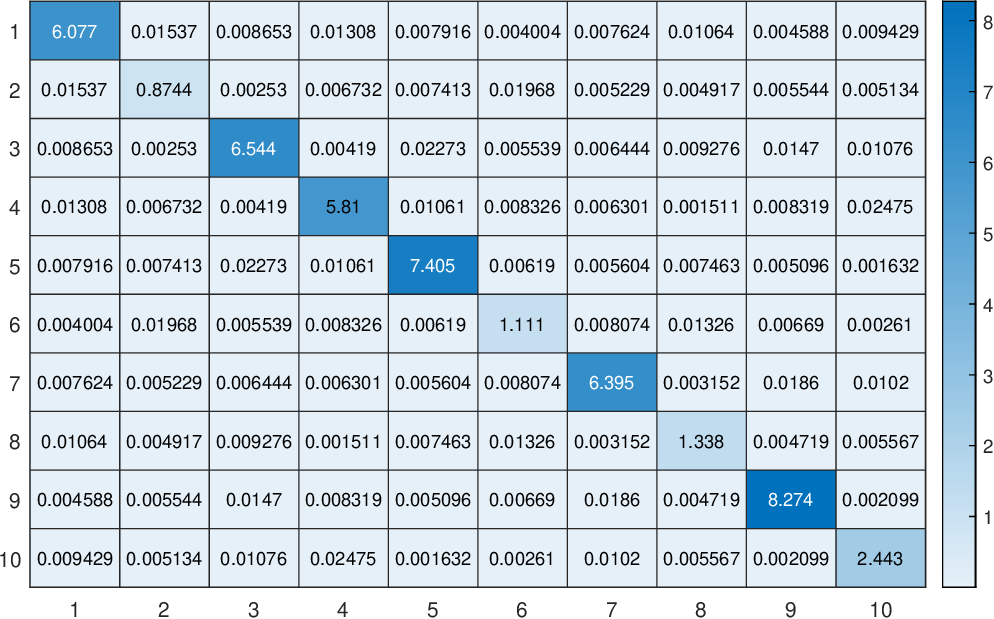}}
  & \resizebox*{0.44\textwidth}{0.21\textheight}{\includegraphics{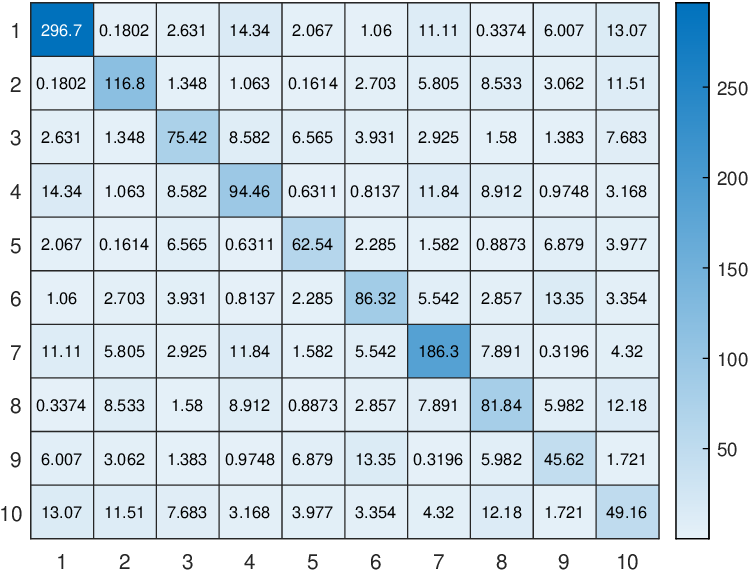}} \\ [1ex]
\rotatebox{90}{\hspace*{1.8cm}complex} &
\resizebox*{0.44\textwidth}{0.21\textheight}{\includegraphics{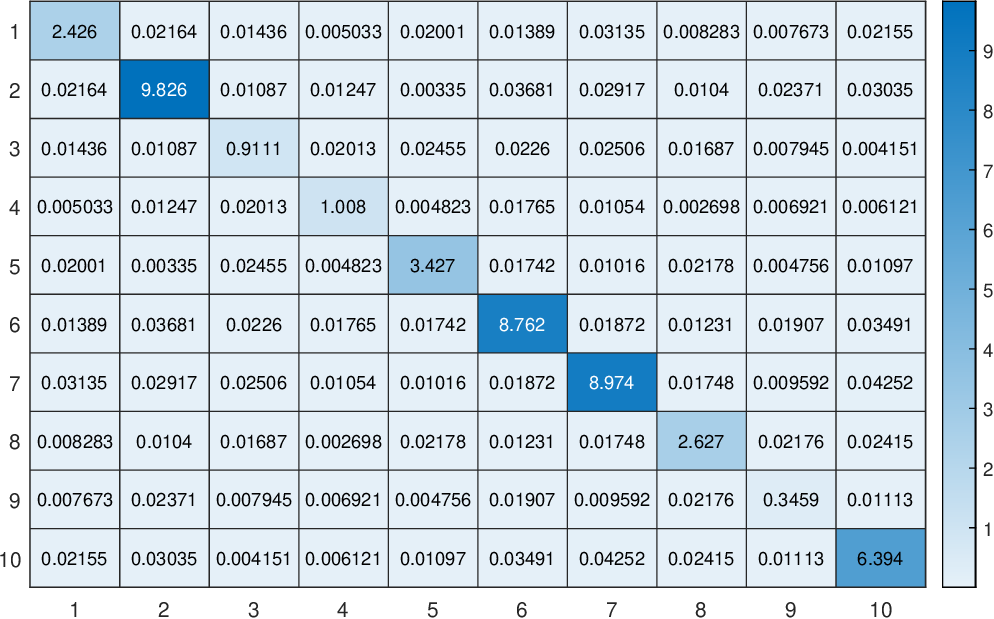}}
  & \resizebox*{0.44\textwidth}{0.21\textheight}{\includegraphics{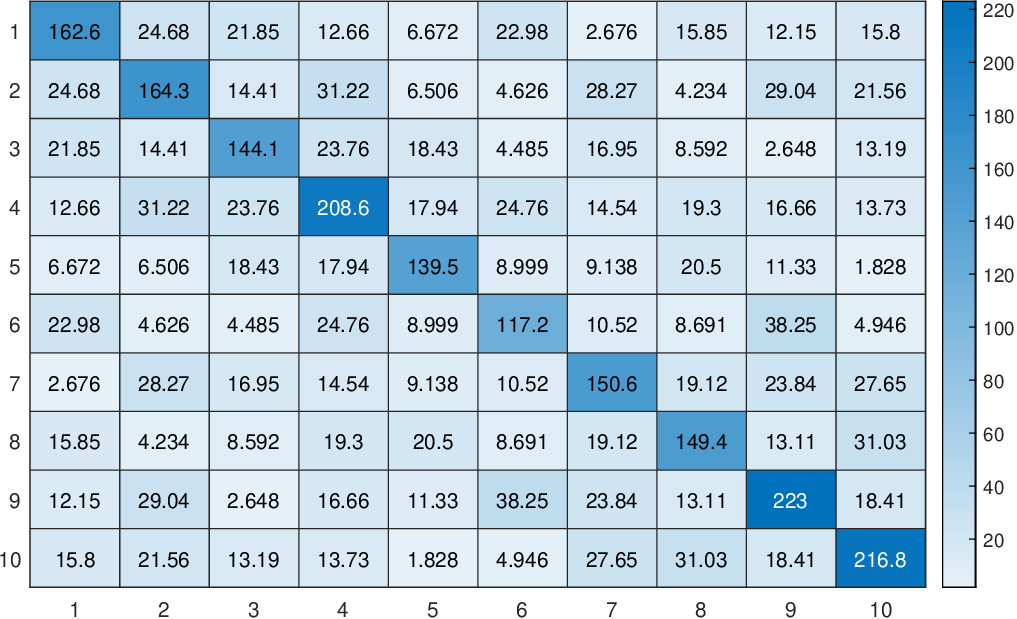}} \\
\end{tabular}\par
}
\vspace{-0.15 cm}
\caption{\small Heatmap of the leading $10$-by-$10$ principal submatrix of converged $P^{\HH}A_1P$, by NPDo, corresponding to its $10$ largest
diagonal entries for $n=100$ with the options: \{real or complex\} times \{jointly diagonalizable or not\}.
  }
\label{fig:fullJD-heat-A1}
\end{figure}

Table~\ref{tbl:fullJD} reports performance statistics for the three
approaches, including CPU time (in seconds), normalized KKT residual $\varepsilon_{\KKT}$ in \eqref{eq:stop-JBD},
and the objective value at convergence.
For each value of $n$, there are two rows: 
the first corresponds to the case where  $\{A_{\ell}\}_{\ell=1}^N$ are
not jointly diagonalizable,
and the second row to the case where $\{A_{\ell}\}_{\ell=1}^N$ are
approximately jointly diagonalizable.
Recall that the FG-algorithm is only applicable to real symmetric matrices,
so its results are omitted (marked as ``--'') in the complex case.
We summarize our observation from the table as follows:
\begin{enumerate}[1)]

  \item The FG-algorithm shows far less competitive performance: much longer CPU times,
	  larger $\varepsilon_{\KKT}$, and smaller objective values, compared to the other approaches.

  \item
	  Jacobi's method performs surprisingly well for the approximately jointly diagonalizable case, 
	  outperforming NPDo in three out of four times
	  -- due to its eventual quadratic convergence~\cite{bubm:1993}.

  \item NPDo clearly outperforms Jacobi's method for the not jointly diagonalizable case,
	  except in the real case with $n=100$.

  \item
	  The not jointly diagonalizable case is considerably more challenging
      for all three approaches compared to the approximately jointly
	  diagonalizable case.
	  For example, in the real case with $n=200$,
      NPDo takes $2.67$ seconds for approximately jointly
	  diagonalizable $\{A_{\ell}\}_{\ell=1}^N$, but $8.29\cdot 10^2$ seconds for
      not jointly diagonalizable case -- a time ratio of $31.0$.
	  Time ratios are even higher for Jacobi's method and the FG-algorithm.

\end{enumerate}

Finally, to visually demonstrate that the mass of each $A_{\ell}$
becomes concentrated on its diagonal after transformation,
we plot in Figure~\ref{fig:fullJD-heat-A1} some sample heatmaps of the
$10$-by-$10$ principal submatrix corresponding to the $10$ largest diagonal entries
of the converged $P^{\HH}A_1P$, as computed by NPDo,
for the set of real test matrices of size 100-by-100 as used in~\Cref{tbl:fullJD}.
The use of $10$-by-$10$ is because otherwise little information can
	be read out from the heatmap of the whole matrix.
It is clearly visible that
for the not jointly diagonalizable case, the diagonal entries are substantially larger than
the off-diagonal ones,
while in the approximately jointly diagonalizable case, the off-diagonal entries are nearly negligible.

\subsection{Experiments on \pjd}\label{ssec:egs-pjd}
We now test on larger matrices and consider the \pjd\ problem for the approximately jointly diagonalizable case to various degree,
where a partial diagonalizer $P\in\STM{n}{k}$ is sought.
We apply both
the NPDo (i.e.,~\Cref{alg:master-NPDo:g} combined with~\Cref{alg:master-NPDo})
and accNPDo (i.e.,~\Cref{alg:master-NPDo:g} combined with~\Cref{alg:JBDvLOCG})
to compute $P$.
For comparison, we also include results obtained by computing a full
diagonalizer and then selecting the top $k$ diagonal directions, as
previously described in~\Cref{rk:fjd}.
This scheme can be summarized as follows:
\begin{enumerate}[(1)]
	\item
		Compute a full diagonalizer $P\equiv
		[\bp_1,\bp_2\ldots,\bp_n]\in\STM{n}{n}$
		(e.g., by Jacobi's method),
		along with transformed matrices
   		$\what A_{\ell}=P^{\HH}A_{\ell}P\equiv [a_{\ell;ij}]_{1\leq
		i,j\leq n}$ for
		$\ell=1,\dots, N$.
	\item Compute $d_i=\sum_{\ell=1}^N (a_{\ell;ii})^2$ for $i=1,\dots, n$ and
		sort $\{d_i\}_{i=1}^n$ in descending order
		$\{d_{j_i}\}_{i=1}^n$, where $(j_1,j_2,\ldots,j_n)$ is a
		permutation of $(1,2,\ldots,n)$.
  \item Set $P=[\bp_{j_1},\ldots,\bp_{j_k}]\in\STM{k}{n}$,
        with the corresponding objective value from~\eqref{eq:opt-pjbd}
		given by $f(P)=\sum_{i=1}^kd_{j_i}$.
\end{enumerate}
This method is at the cost of a full \jd,
unlike NPDo and accNPDo, which operate directly on the partial \jd\ problem.
In our experiment, we use Jacobi's method to obtain the full JD.
The FG-algorithm is excluded from this experiment
due to its less competitive performance as previously observed
and the need to work with general
(not necessarily positive definite) Hermitian matrices.

For the test matrices, each $A_{\ell}$ is generated using \eqref{eq:aJD-2b} with
\eqref{eq:NJD-B}, \eqref{eq:aJD-1}, and
\begin{equation}\label{eq:aJD-3'}
D_{\ell}={\tt diag}(10\cdot{\tt randn}(n,1)),
\end{equation}
and $\eta$ ranging from $10^{-3}$ to $10^0$.
The resulting $A_{\ell}$ are generally Hermitian indefinite, and thus
\Cref{alg:master-NPDo:g} requires the shifting steps as preprocessing.
The matrices $\{A_{\ell}\}_{\ell=1}^N$ may be regarded as
approximately jointly diagonalizable to a certain degree for $\eta\le 10^{-1}$,
but not jointly diagonalizable when $\eta\approx 10^0$ or larger.
In our experiments, we fix $N=10$, $k=10$, and vary $n$ from $10^2$ to $10^3$.
We have done rather extensive tests using real and complex matrices
and observed similar algorithmic behaviors in both cases.
Therefore, we report numerical results only for the more general
case where $A_{\ell}$ are complex and Hermitian.

\begin{figure}[t]
{\centering
\begin{tabular}{lcccc}
\rotatebox{90}{\hspace*{1.2cm}$\eta=10^0$} &
\resizebox*{0.22\textwidth}{0.14\textheight}{\includegraphics{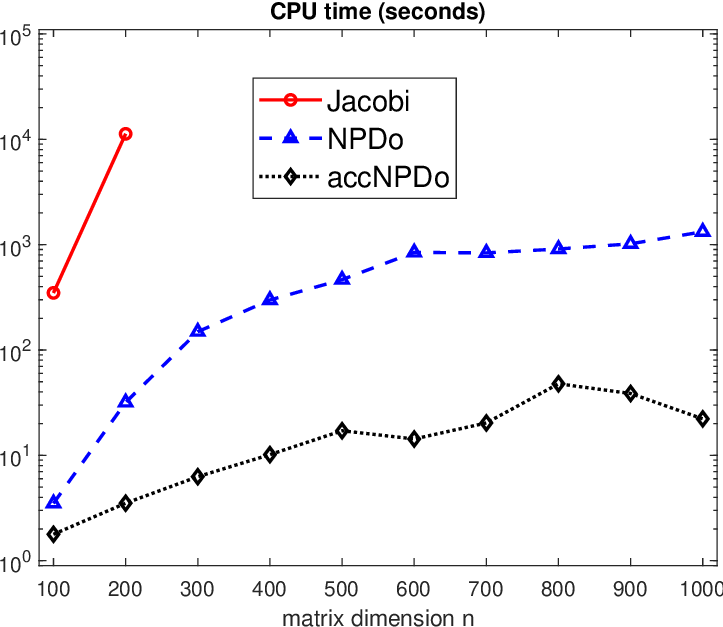}}
  & \resizebox*{0.22\textwidth}{0.14\textheight}{\includegraphics{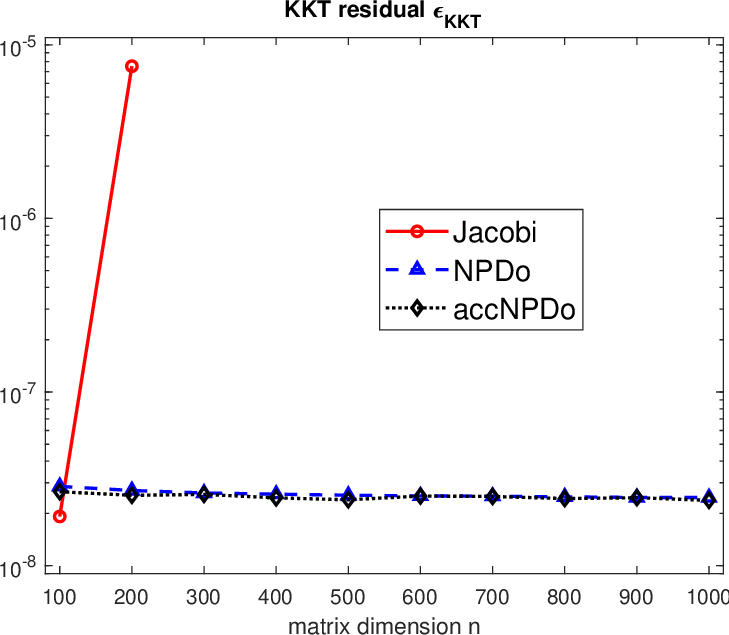}}
  & \resizebox*{0.22\textwidth}{0.14\textheight}{\includegraphics{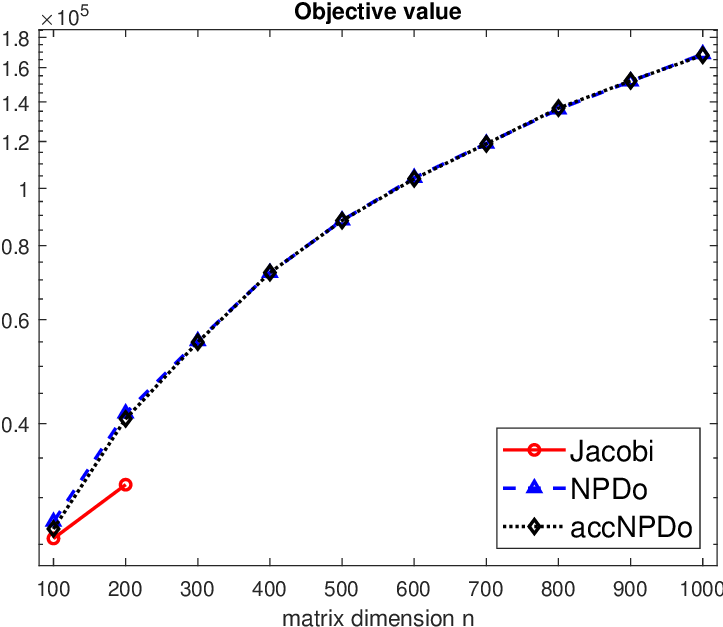}}
  & \resizebox*{0.22\textwidth}{0.14\textheight}{\includegraphics{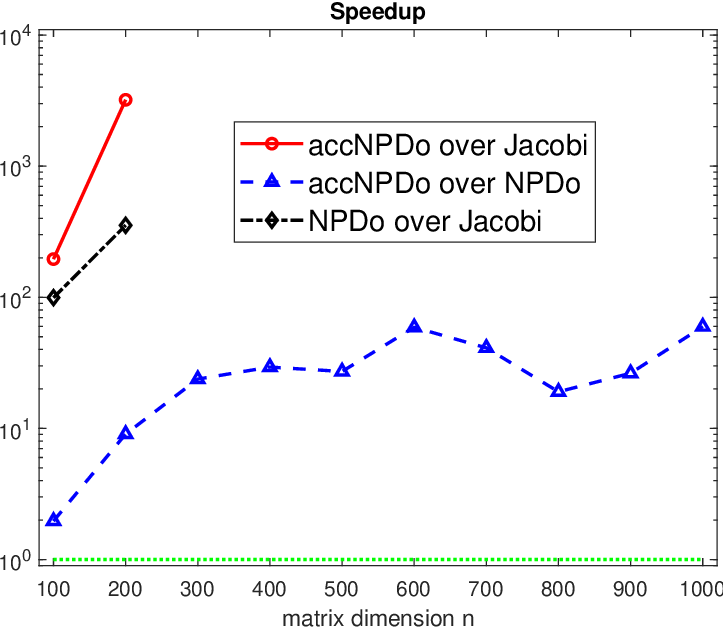}} \\
\rotatebox{90}{\hspace*{1.2cm}$\eta=10^{-1}$} &
\resizebox*{0.22\textwidth}{0.14\textheight}{\includegraphics{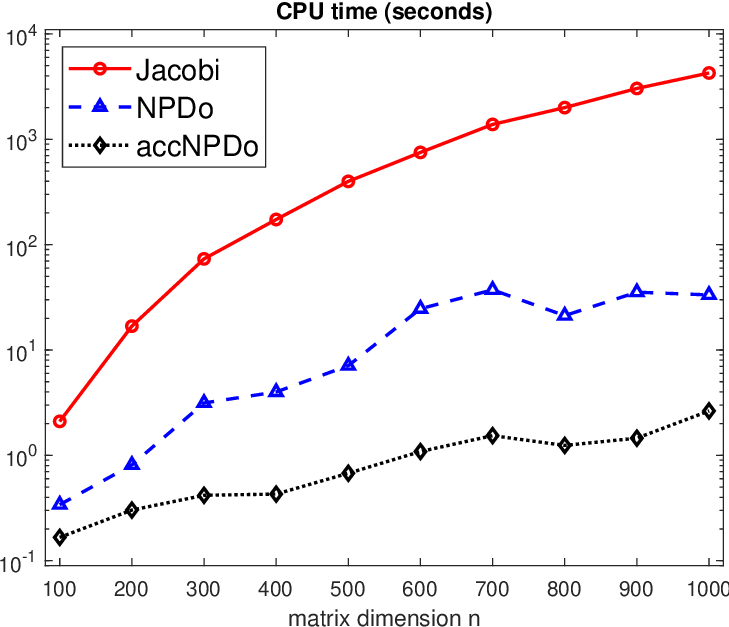}}
  & \resizebox*{0.22\textwidth}{0.14\textheight}{\includegraphics{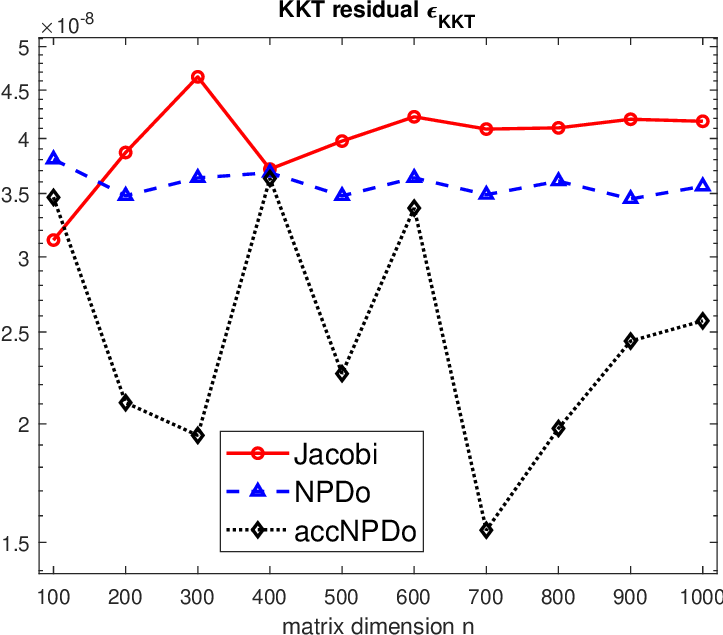}}
  & \resizebox*{0.22\textwidth}{0.14\textheight}{\includegraphics{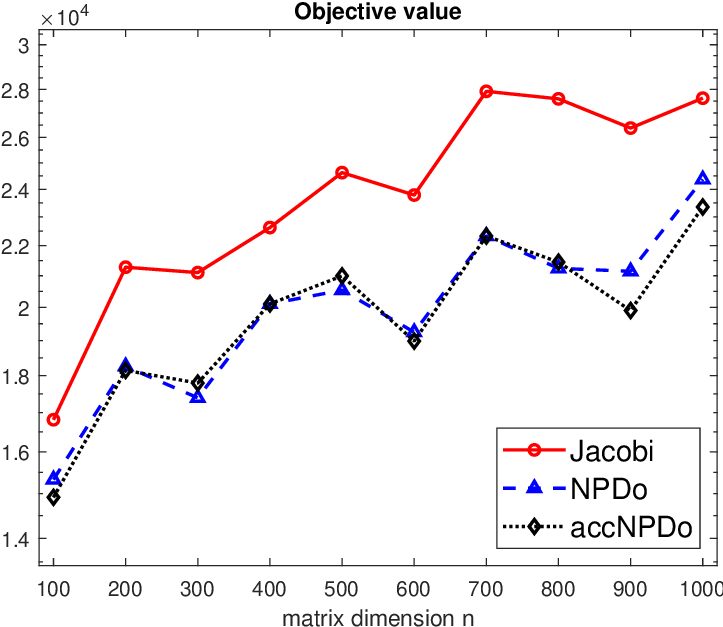}}
  & \resizebox*{0.22\textwidth}{0.14\textheight}{\includegraphics{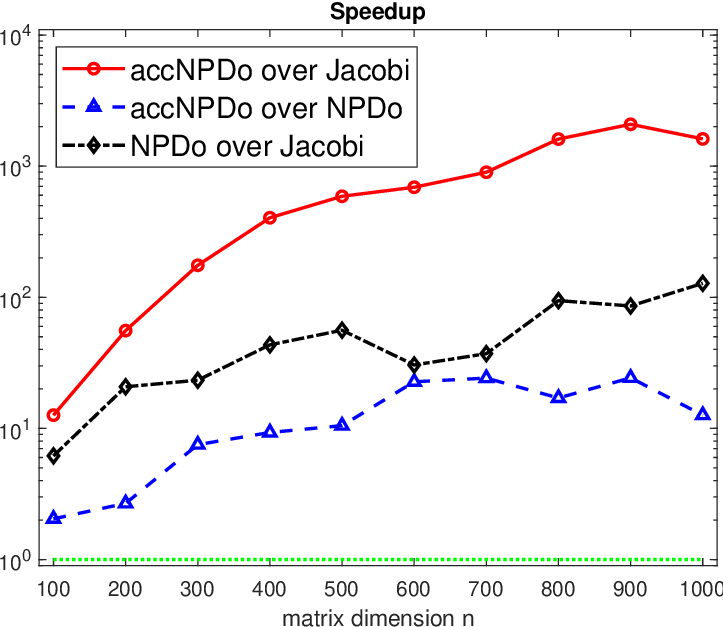}} \\
\rotatebox{90}{\hspace*{1.2cm}$\eta=10^{-2}$} &
\resizebox*{0.22\textwidth}{0.14\textheight}{\includegraphics{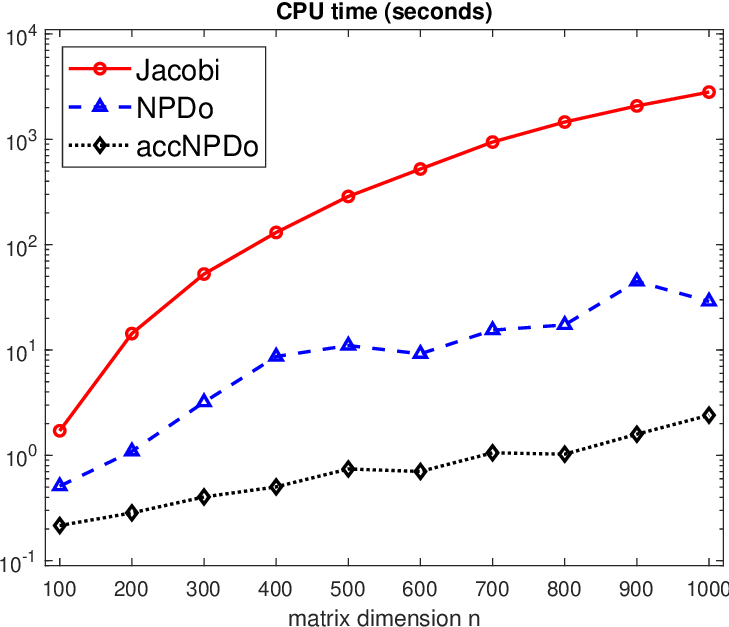}}
  & \resizebox*{0.22\textwidth}{0.14\textheight}{\includegraphics{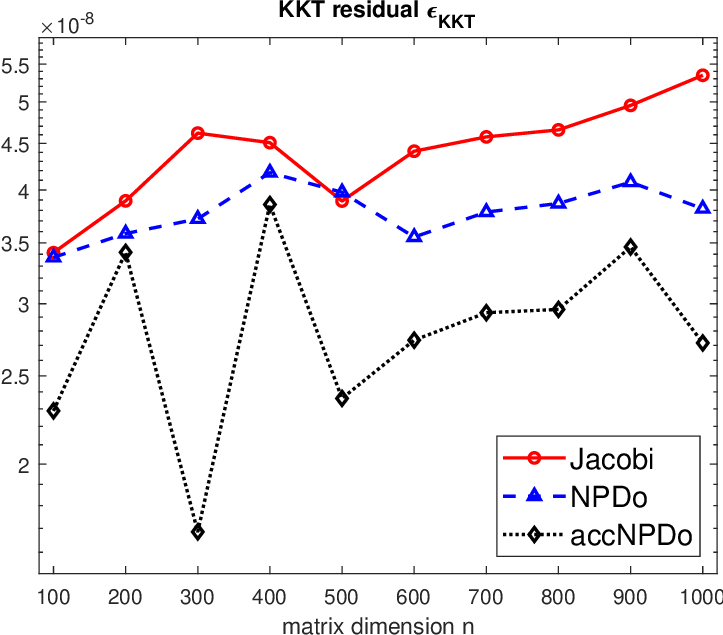}}
  & \resizebox*{0.22\textwidth}{0.14\textheight}{\includegraphics{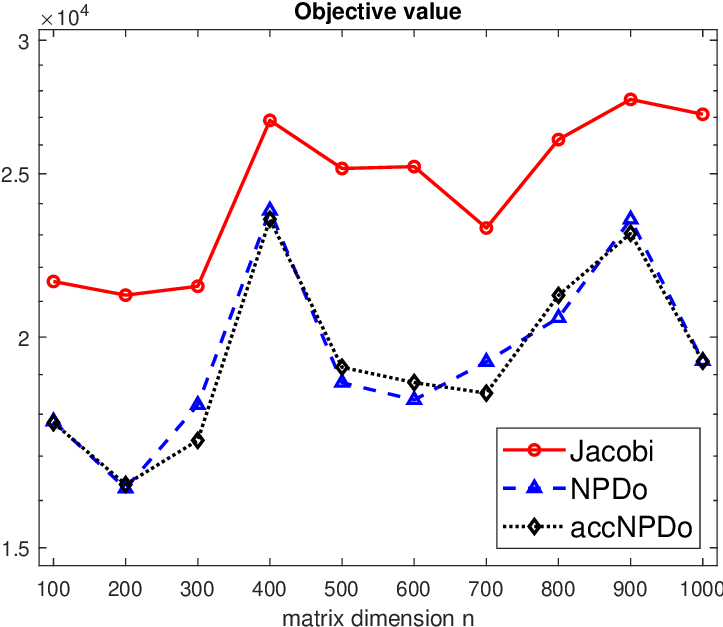}}
  & \resizebox*{0.22\textwidth}{0.14\textheight}{\includegraphics{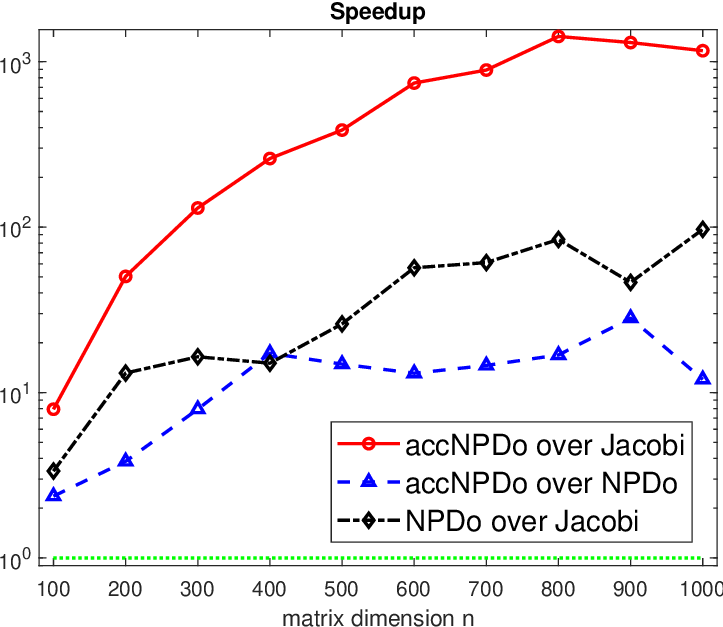}}  \\
\rotatebox{90}{\hspace*{1.2cm}$\eta=10^{-3}$} &
\resizebox*{0.22\textwidth}{0.14\textheight}{\includegraphics{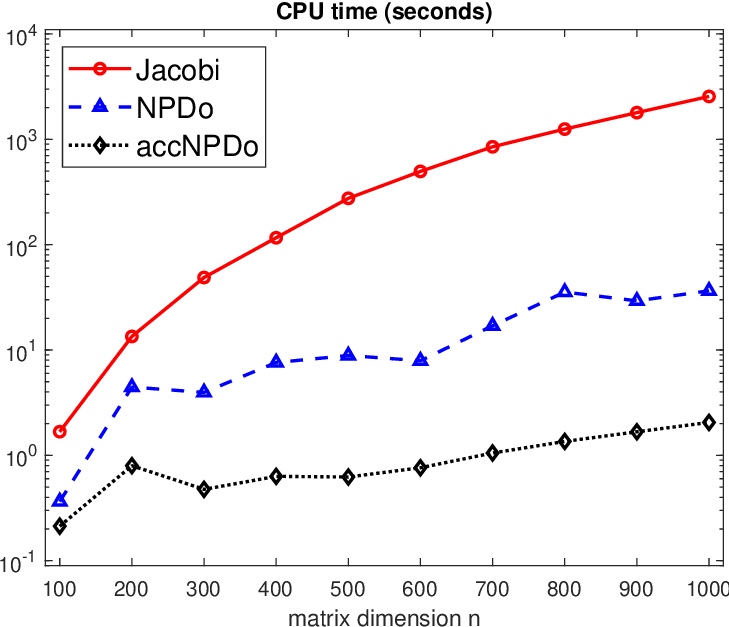}}
  & \resizebox*{0.22\textwidth}{0.14\textheight}{\includegraphics{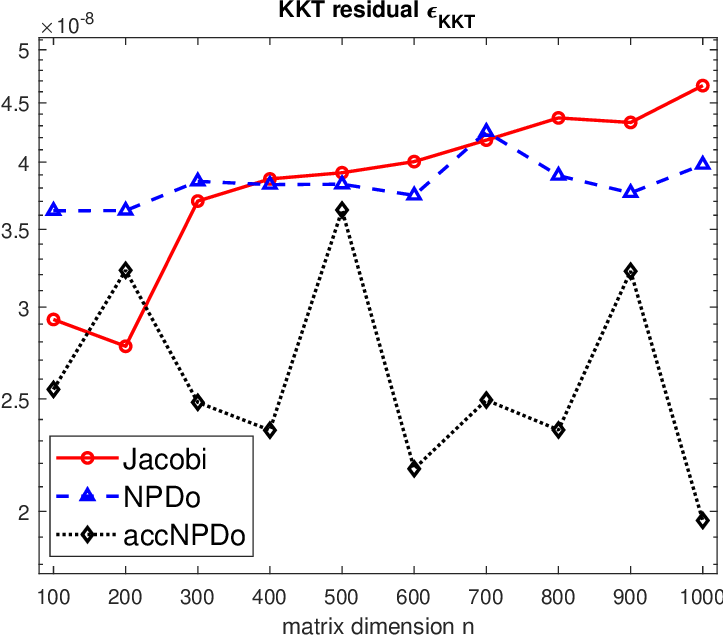}}
  & \resizebox*{0.22\textwidth}{0.14\textheight}{\includegraphics{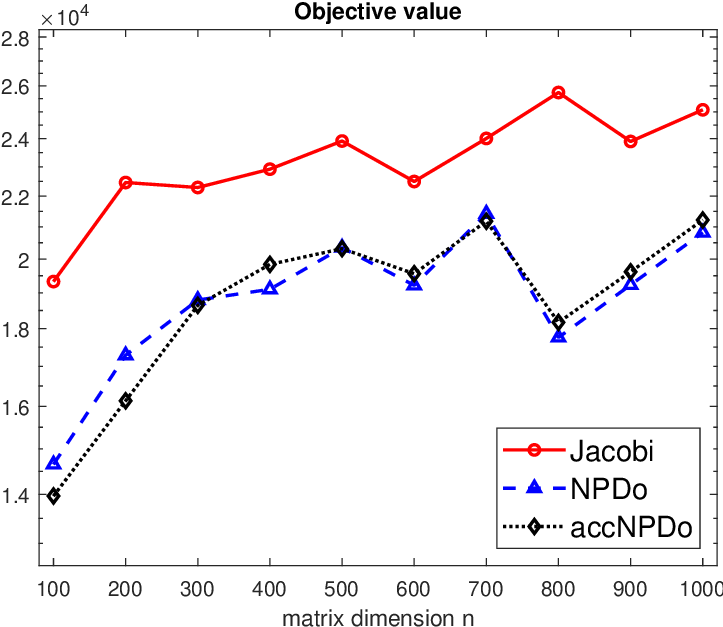}}
  & \resizebox*{0.22\textwidth}{0.14\textheight}{\includegraphics{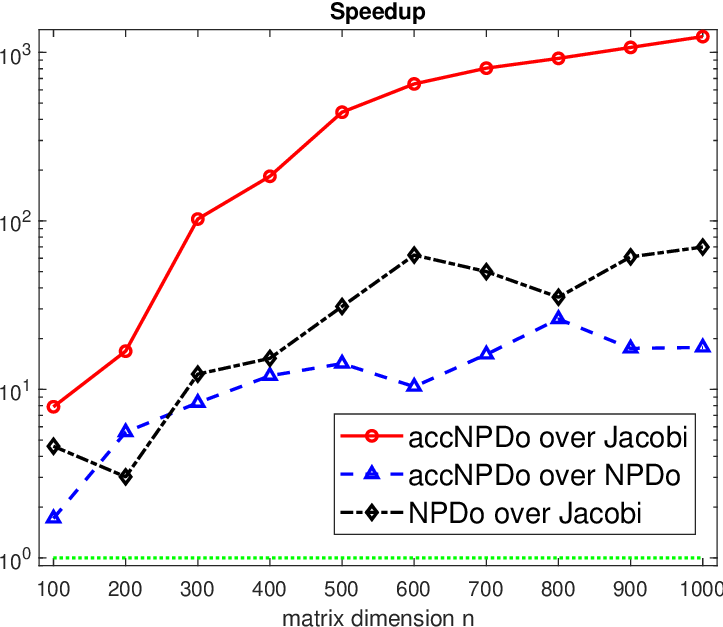}}
\end{tabular}\par
}
\vspace{-0.15 cm}
\caption{\small
   Performance  by Jacobi's method, NPDo, and accNPDo for  \pjd\ of complex
   and Hermitian $\{A_{\ell}\}_{\ell=1}^N$ generated according to
   \eqref{eq:aJD-2b} with \eqref{eq:aJD-3'}, where $k=10$, $N=10$, and $n$ varies from $10^2$ to $10^3$.
   Jacobi's method did not complete its computation within several hours at $\eta=10^0$ for $n\ge 300$.
  }
\label{fig:aJD-n-vary}
\end{figure}

\Cref{fig:aJD-n-vary} compares the performance of Jacobi's method, NPDo, and accNPDo for  \pjd\ of complex
   and Hermitian $\{A_{\ell}\}_{\ell=1}^N$.
Since $A_{\ell}$ are randomly generated, each run may produce
different results; thus
\Cref{fig:aJD-n-vary} is intended to reflect the typical behavior of the three methods.
We have the following observations.
Regarding computational time:
\begin{itemize}
  \item
	  For $\eta\le 10^{-1}$, Jacobi's method performs reasonably well,
	  but it runs significantly slower than both NPDo and accNPDo as $n$ increases.
      For example, at $\eta=10^{-3}$ and $n=10^3$, it is $1,241.5$ times slower than accNPDo and
      $69.9$ times slower than NPDo.

  \item
	  At $\eta=10^0$, Jacobi's method did not complete the computation within a few hours for $n\ge 300$,
	  due to significantly increased number of iterations (i.e., sweeps).
	  This indicates that Jacobi's method is impractical for the not-approximately jointly diagonalizable
        case, even for moderate matrix sizes.

\end{itemize}
Regarding the quality of diagonalizer:
\begin{itemize}
  \item
   	  At $\eta=10^0$, Jacobi's method fails to go beyond
	  $n=200$ within several hours as noted earlier, but NPDo and accNPDo achieves $O(10^{-8})$
	  in $\varepsilon_{\KKT}$ relatively quickly.
  \item
	  For $\eta\le 10^{-1}$, the normalized KKT residuals
	  $\varepsilon_{\KKT}$ achieved by all three methods are of
	  $O(10^{-8})$, with an edge going to accNPDo,
		  indicating comparable solution quality for the principal {\jd}
		  problem~\eqref{eq:opt-pjbd}, in terms of first-order
		  optimality.
	
  \item
	  For $\eta\le 10^{-1}$, Jacobi's method consistently achieves the largest objective values.
	  This is partly due to the fact that
	  its partial diagonalizer is extracted from a full \jd.
	  As noted in~\Cref{rk:fjd},  when the matrices are exactly jointly diagonalizable,
	  such a partial diagonalizer corresponds to the global optimal solution of
	  the  \pjd\ problem~\eqref{eq:opt-pjbd}.
	  Therefore, for matrices that are approximately jointly diagonalizable,
	  Jacobi's method has a good chance to find an
	  better global maximizer of the objective function,
	  while NPDo and accNPDo may converge to local maximums.
	  This explains  the larger objective values by Jacobi's method in
	  the experiments.
\end{itemize}
Overall, accNPDo is the fastest among the three and, on average, yields
the smallest normalized KKT residuals $\varepsilon_{\KKT}$.

\Cref{tbl:aJD-Jac} reports the number of sweeps (outer iterations)
required by Jacobi's method for $\eta\le 10^{-1}$.
Notably, the number of sweeps remains relatively small and
grows slowly with $n$, especially for smaller values of $\eta$.
However, since each sweep requires $O(Nn^3)$ flops,
the CPU time still grows substantially with $n$.
Additionally, there is a clear trend of increasing number of sweeps
as the parameter $\eta$ grows,
indicating the challenges of Jacobi’s method in handling cases
where the matrices are not approximately jointly diagonalizable.

\setlength{\tabcolsep}{4pt}
\renewcommand{\arraystretch}{1.4}
\begin{table}[t]
\caption{\small Number of sweeps (outer iterations) by Jacobi's method}\label{tbl:aJD-Jac}
\centerline{
\begin{tabular}{|c||*{10}{c|}}\hline
\backslashbox{$\eta$}{$n$}
  & 100 & 200 & 300 & 400  &  500  &  600 & 700 & 800 & 900 & 1000 \\ \hline
$10^{-3}$ & 7 &   8 &   9 &  10 &  10 &  11 &  11 &  11 &  11 &  13 \\ \hline
$10^{-2}$ & 8 &   9 &  11 &  11 &  11 &  12 &  12 &  13 &  14 &  13 \\ \hline
$10^{-1}$ & 9 &  11 &  14 &  15 &  15 &  18 &  18 &  20 &  21 &  23\\ \hline
\end{tabular}
}
\end{table}

\subsection{Experiments on  \pjbd}\label{ssec:egs-JBD}
We now consider the general principal {\jbd} problem with block sizes $k_i > 1$.
For this task, only NPDo and accNPDo are applicable.
The test matrices $\{A_{\ell}\}_{\ell=1}^N$ are generated according to \eqref{eq:aJD-2b} with
\eqref{eq:NJD-B}, \eqref{eq:aJD-1}, and
with block-diagonal
\begin{equation}\label{eq:aJBD-D}
D_{\ell}=\diag(10(E_1+E_1^{\HH}),10(E_2+E_2^{\HH}),\ldots,10(E_t+E_t^{\HH})),
\end{equation}
where each $E_i = {\tt randn}(k_i)+{\tt 1i*}{\tt randn}(k_i)$.
We have also conducted extensive experiments for the real case with $E_i={\tt randn}(k_i)$ for the real case,
and observed similar algorithmic behaviors.
In the test below, we fix $N=10$, $k=10$, and $k_i=2$, for $i=1,\dots,5$
(i.e., $t=5$ blocks of equal size 2-by-2),
and vary $n$ from $10^2$ to $10^3$, and $\eta$ from $10^0$ down to $10^{-3}$.

\begin{figure}[t]
{\centering
\begin{tabular}{lcccc}
\rotatebox{90}{\hspace*{1.2cm}$\eta=10^0$} &
\resizebox*{0.22\textwidth}{0.14\textheight}{\includegraphics{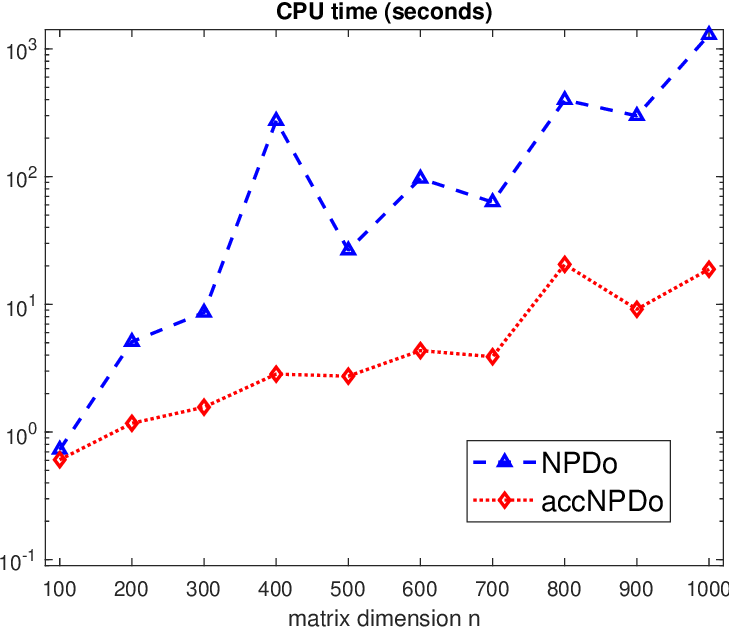}}
  & \resizebox*{0.22\textwidth}{0.14\textheight}{\includegraphics{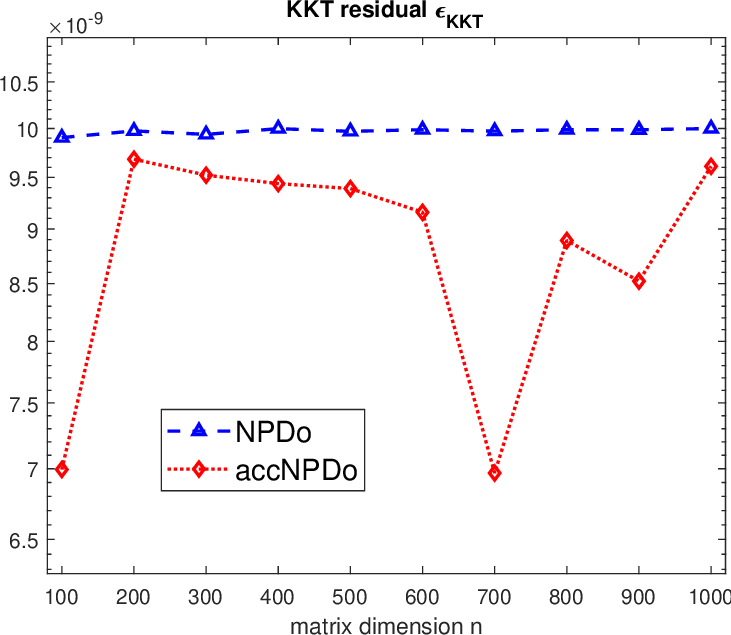}}
  & \resizebox*{0.22\textwidth}{0.14\textheight}{\includegraphics{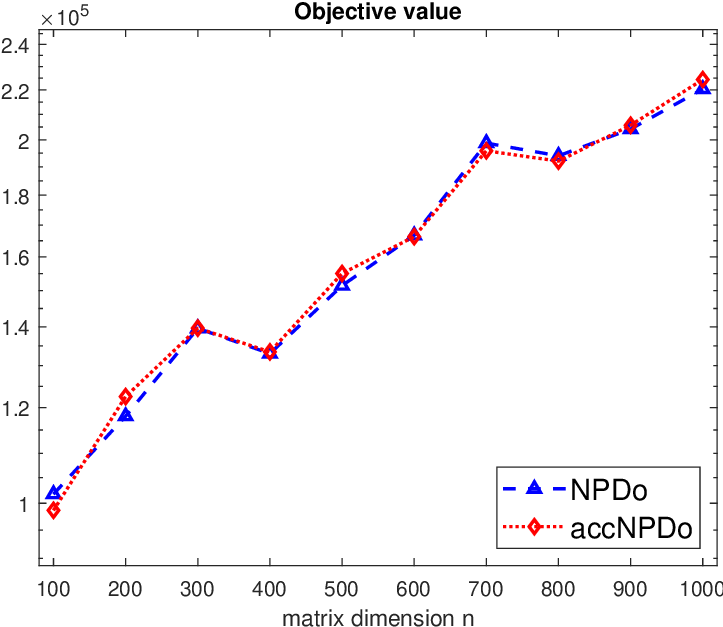}}
  & \resizebox*{0.22\textwidth}{0.14\textheight}{\includegraphics{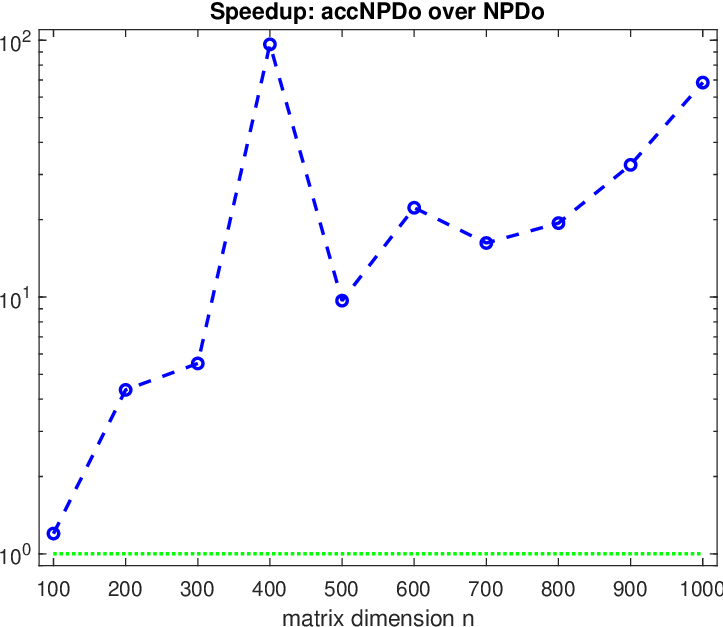}} \\
\rotatebox{90}{\hspace*{1.2cm}$\eta=10^{-1}$} &
\resizebox*{0.22\textwidth}{0.14\textheight}{\includegraphics{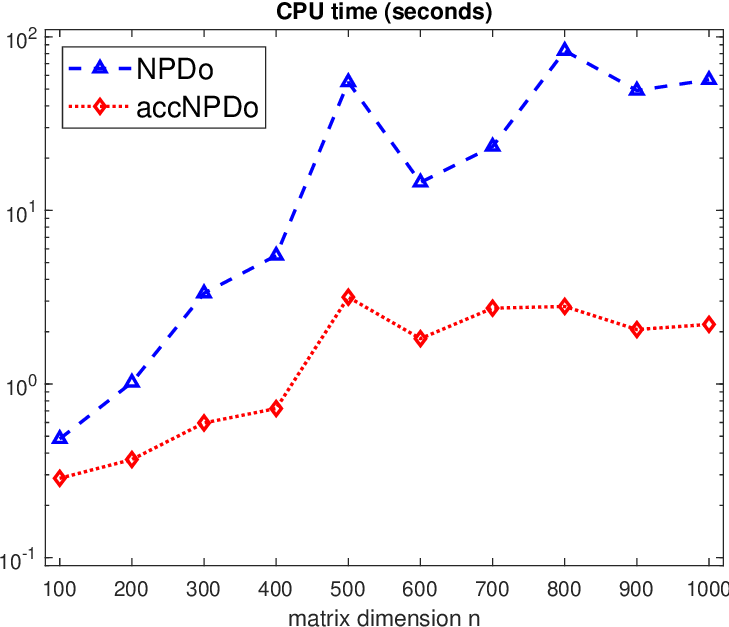}}
  & \resizebox*{0.22\textwidth}{0.14\textheight}{\includegraphics{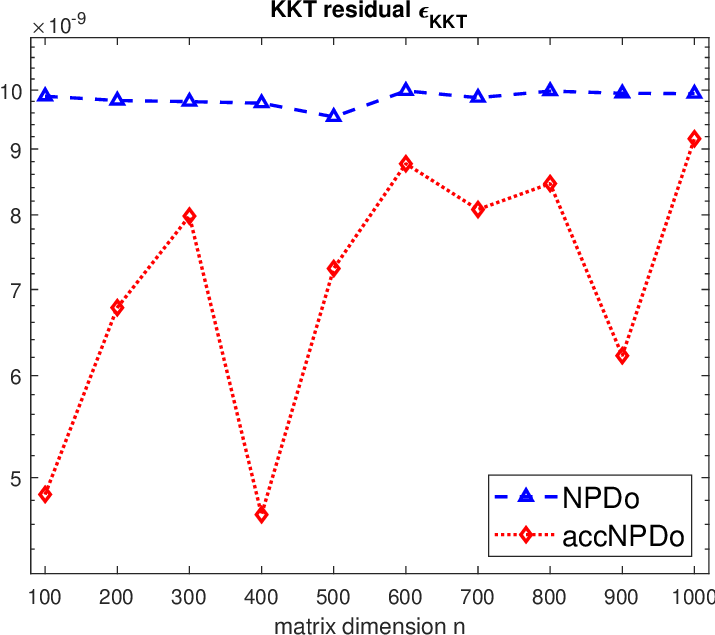}}
  & \resizebox*{0.22\textwidth}{0.14\textheight}{\includegraphics{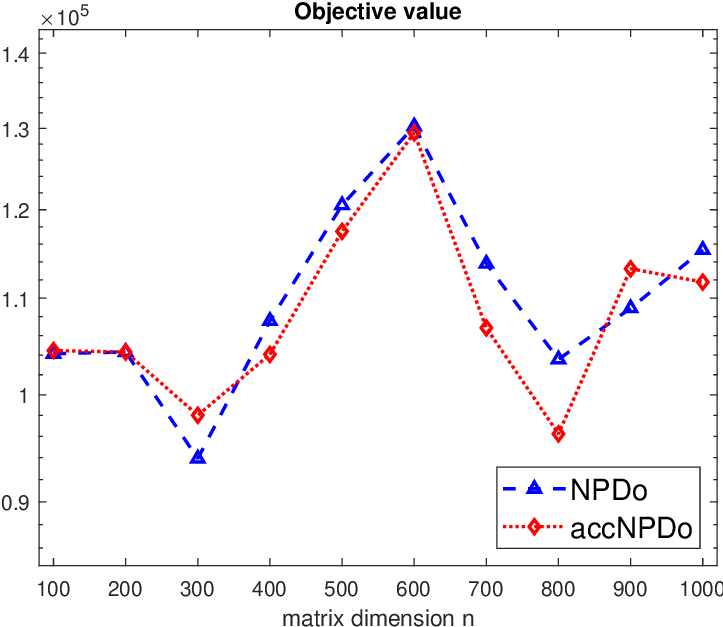}}
  & \resizebox*{0.22\textwidth}{0.14\textheight}{\includegraphics{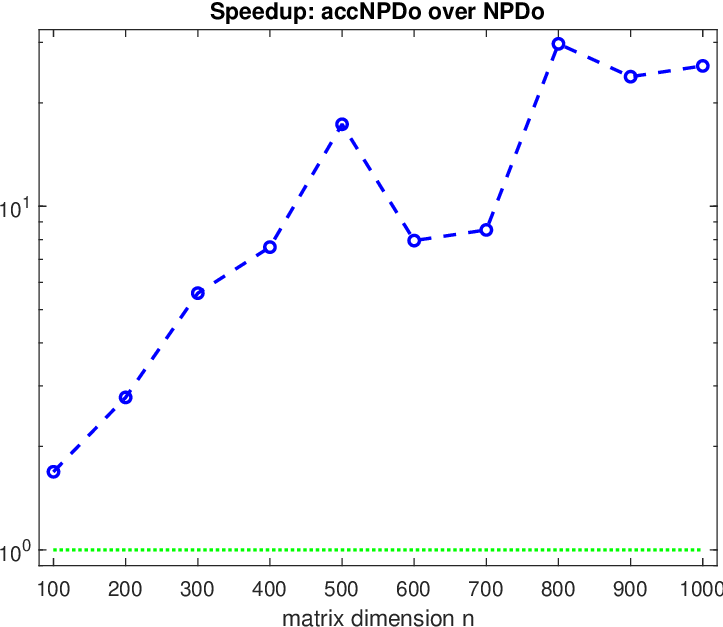}} \\
\rotatebox{90}{\hspace*{1.2cm}$\eta=10^{-2}$} &
\resizebox*{0.22\textwidth}{0.14\textheight}{\includegraphics{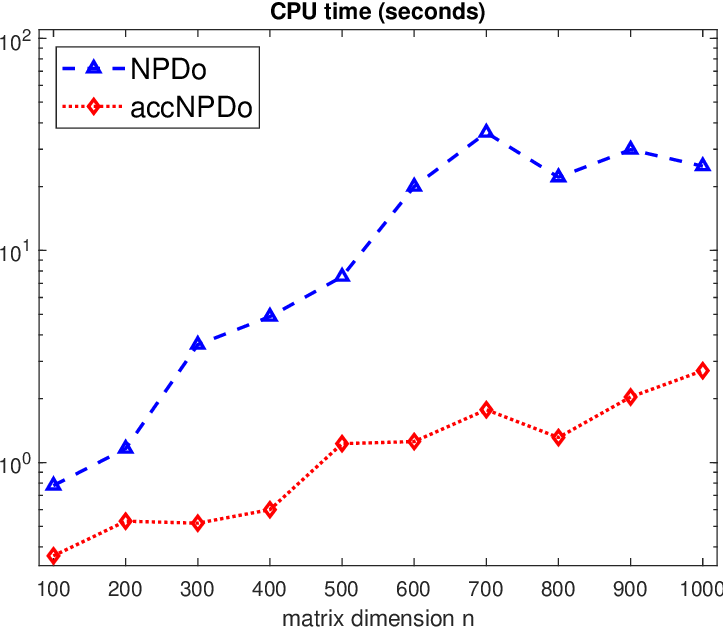}}
  & \resizebox*{0.22\textwidth}{0.14\textheight}{\includegraphics{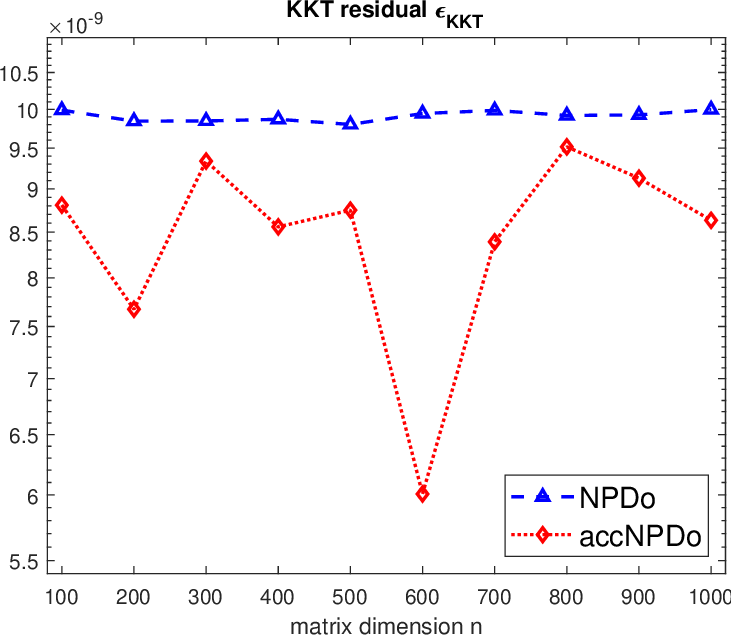}}
  & \resizebox*{0.22\textwidth}{0.14\textheight}{\includegraphics{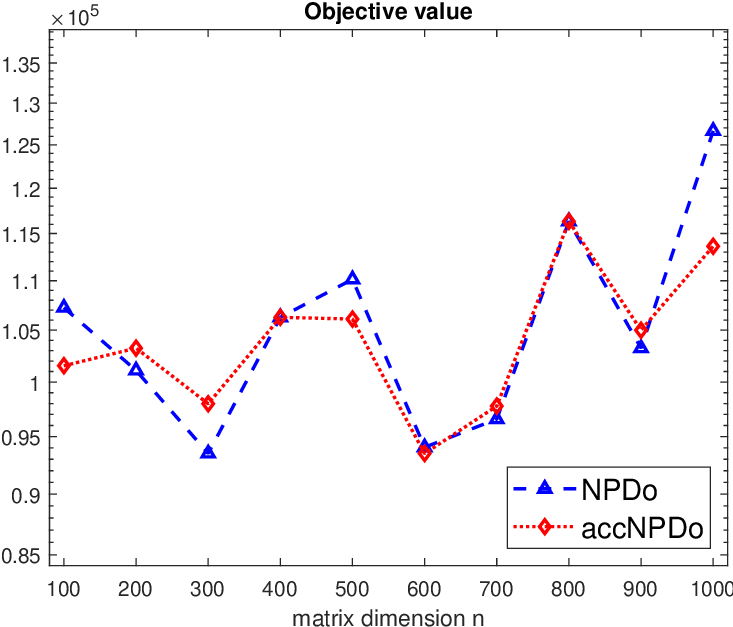}}
  & \resizebox*{0.22\textwidth}{0.14\textheight}{\includegraphics{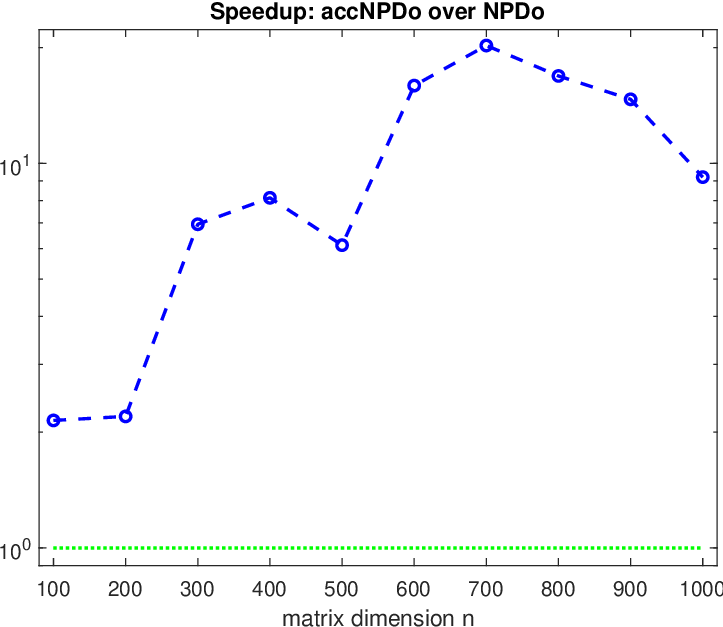}}  \\
\rotatebox{90}{\hspace*{1.2cm}$\eta=10^{-3}$} &
\resizebox*{0.22\textwidth}{0.14\textheight}{\includegraphics{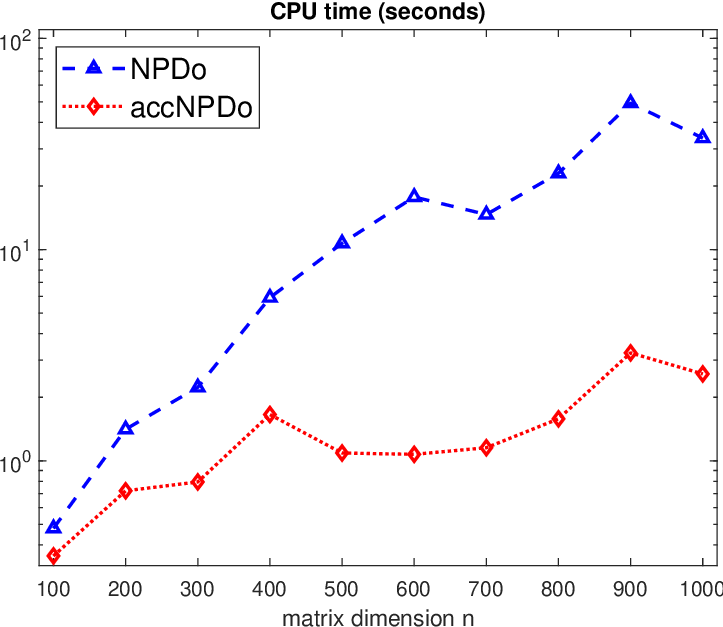}}
  & \resizebox*{0.22\textwidth}{0.14\textheight}{\includegraphics{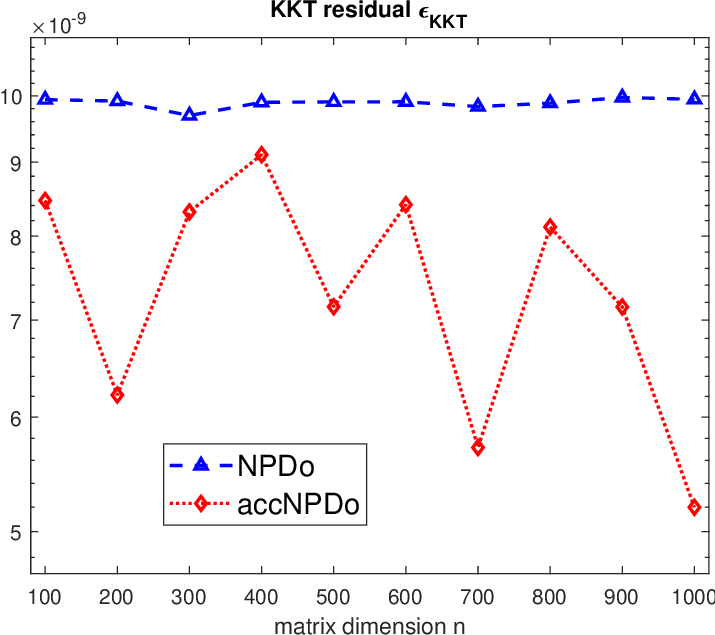}}
  & \resizebox*{0.22\textwidth}{0.14\textheight}{\includegraphics{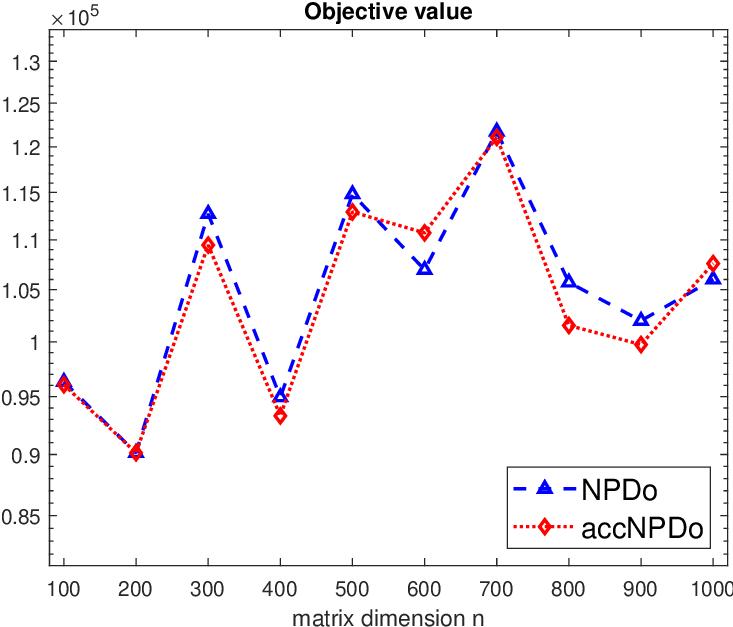}}
  & \resizebox*{0.22\textwidth}{0.14\textheight}{\includegraphics{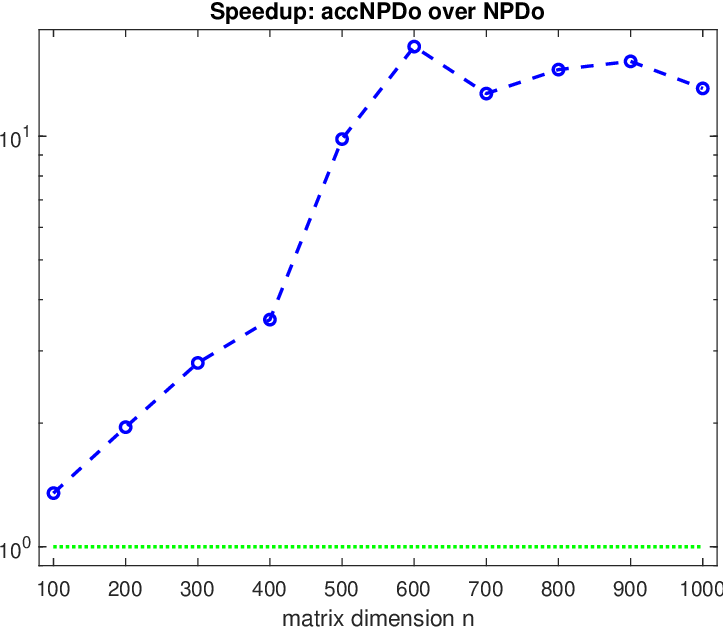}}
\end{tabular}\par
}
\vspace{-0.15 cm}
\caption{\small
   Performance by  NPDo and accNPDo for \pjbd\  of complex and Hermitian
   $\{A_{\ell}\}_{\ell=1}^N$ generated according to
   \eqref{eq:aJD-2b} with \eqref{eq:aJBD-D},
   where $k=10$, $N=10$, and $n$ varies from $10^2$ to $10^3$.
  }
\label{fig:aJBD-n-vary}
\end{figure}

\Cref{fig:aJBD-n-vary} compares the performance of NPDo and accNPDo.
We make the following observations:
(a) The normalized KKT residuals $\varepsilon_{\KKT}$ achieved by the two methods
are of $O(10^{-8})$, with an edge going to accNPDo,
indicating similar solution quality for the computed  \pjbd;
(b) Both methods achieves comparable optimal objective values;
(c) accNPDo is consistently faster.

\subsection{Effect of initial guess}
%
We have been using  random initial $P^{(0)}$
as generated in \eqref{eq:P0-init} for use by NPDo and accNPDo.
With the same initial for each generated testing set $\{A_{\ell}\}_{\ell=1}^N$, both NPDo and accNPDo have produced decent stationary points, as they should according to the NPDo theory
\cite{li:2024}. Unfortunately, the theory does not guarantee that these stationary points are global maximizers.
In fact, often they are not.
This is clear from the numerical results in
subsection~\ref{ssec:egs-pjd} where we observed that Jacobi's method produced the largest objective value for each
testing set generated with $\eta\le 10^{-1}$ (the approximately jointly diagonalizable case), while all three methods
ended up with the normalized KKT residuals $\varepsilon_{\KKT}=O(10^{-8})$, indicating the computed maximizers
have similar quality as stationary points.
An intuitive, but not conclusive, explanation is made in \Cref{rk:fjd} to argue that the \pjd\ maximizer induced from the solution by Jacobi's method is more likely a global maximizer for the approximately jointly diagonalizable case,
explaining why Jacobi's method produced the largest objective value.
From a different perspective, that explanation  suggests an initial guess strategy, i.e., the eigen-basis matrix of
$A_{\tot}$ as in~\eqref{eq:Atot} associated with its $k$ largest eigenvalues could serve as an initial $P^{(0)}$.

We have experimented with this strategy.
While generating the numerical results with random initials that yield \Cref{fig:aJD-n-vary}, at the same time we also run NPDo and accNPDo with an initial computed by LOBPCG (without preconditioning) on $A_{\tot}$ as we just mentioned.
Note that we compute a very rough approximate eigen-basis matrix $P^{(0)}$ associated with the $k$ largest eigenvalues of
$A_{\tot}$ so that
$$
\frac {\big\|A_{\tot}P^{(0)}-P^{(0)}\big([P^{(0)}]^{\HH}A_{\tot}P^{(0)}\big)\big\|_{\F}}{\sum_{\ell=1}^N\|A_{\ell}\|_{\F}\|A_{\ell}\|_2}
   \le 10^{-3}.
$$
\Cref{fig:aJD-n-vary-init} compares the objective values by Jacobi's method, NPDo and accNPDo (with random initials),
and NPDo${}_{\init}$ and accNPDo${}_{\init}$ (with the new initial strategy).
\begin{figure}[t]
{\centering
\begin{tabular}{lcccc}
& $\eta=10^{0}$ & $\eta=10^{-1}$ & $\eta=10^{-2}$ & $\eta=10^{-3}$ \\ 
\rotatebox{90}{\hspace*{1.0cm}real}
  & \resizebox*{0.22\textwidth}{0.14\textheight}{\includegraphics{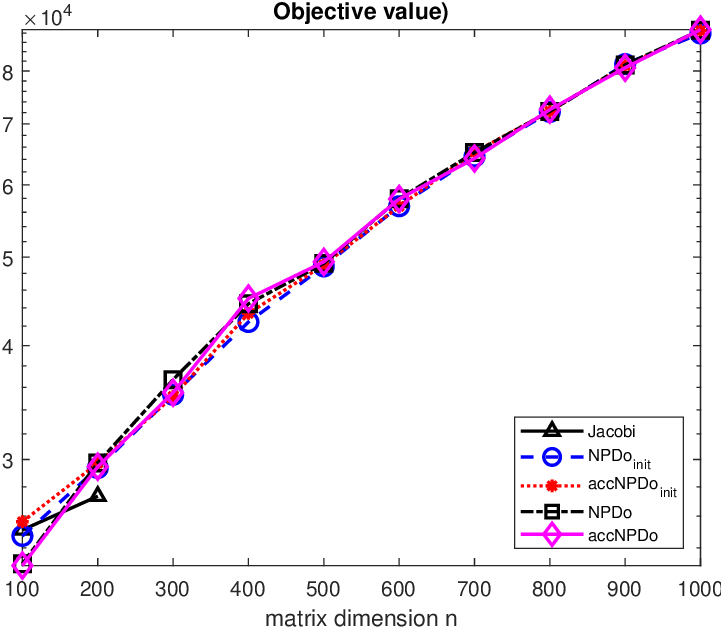}}
  & \resizebox*{0.22\textwidth}{0.14\textheight}{\includegraphics{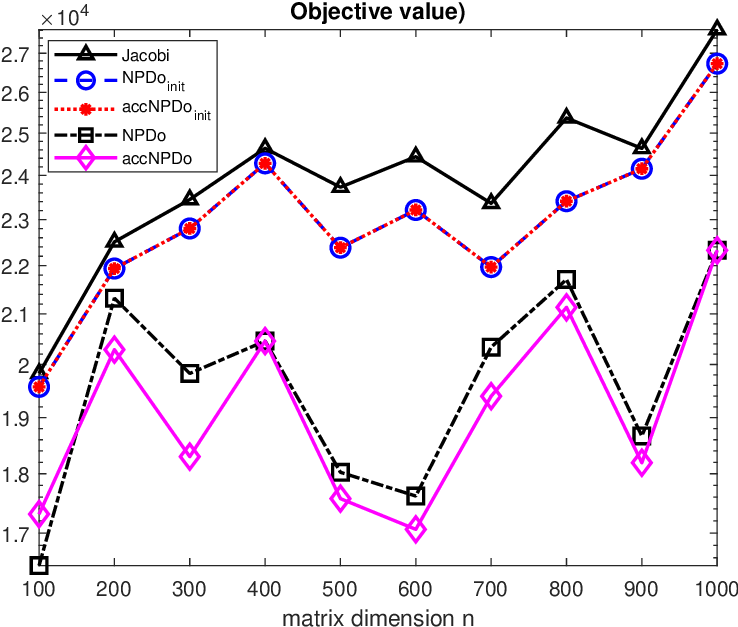}}
  & \resizebox*{0.22\textwidth}{0.14\textheight}{\includegraphics{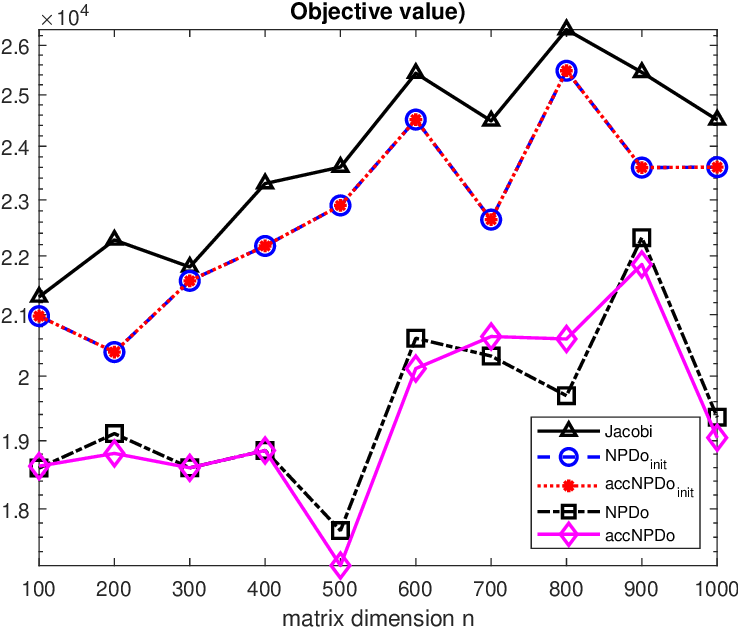}}
  & \resizebox*{0.22\textwidth}{0.14\textheight}{\includegraphics{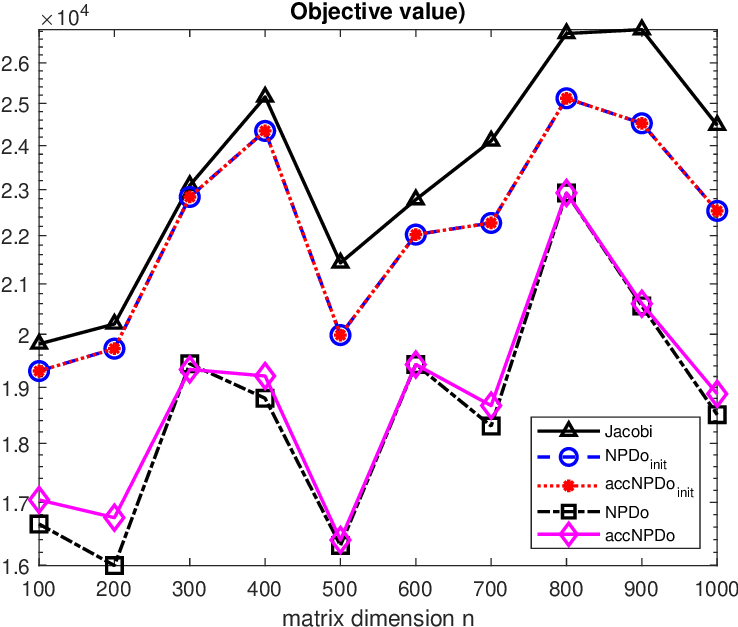}} \\ 
\rotatebox{90}{\hspace*{0.8cm}complex}
  & \resizebox*{0.22\textwidth}{0.14\textheight}{\includegraphics{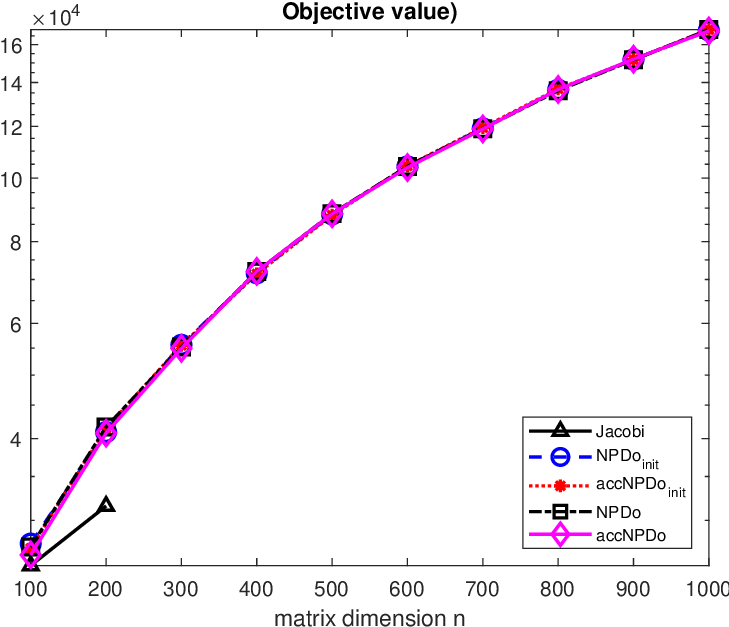}}
  & \resizebox*{0.22\textwidth}{0.14\textheight}{\includegraphics{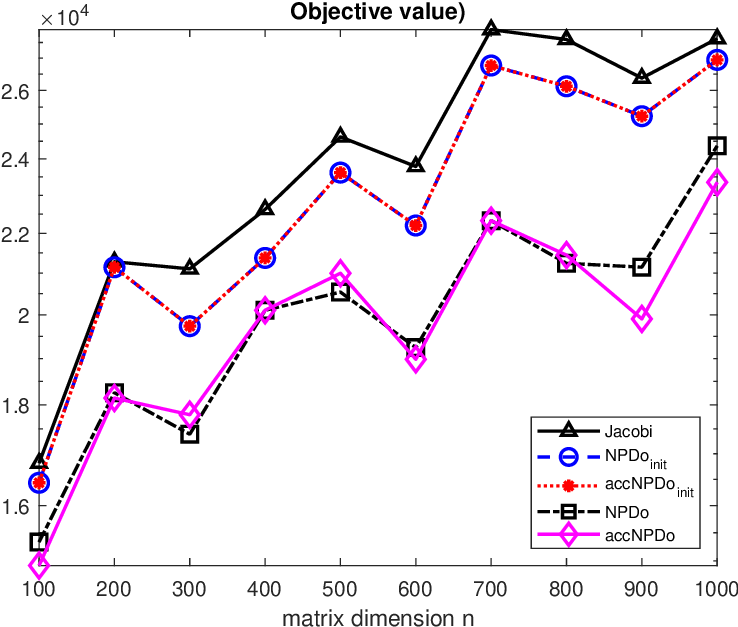}}
  & \resizebox*{0.22\textwidth}{0.14\textheight}{\includegraphics{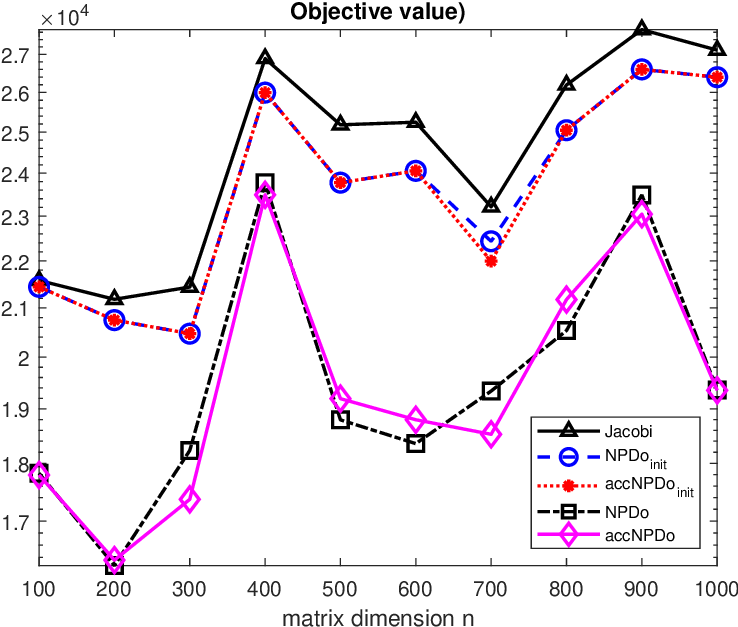}}
  & \resizebox*{0.22\textwidth}{0.14\textheight}{\includegraphics{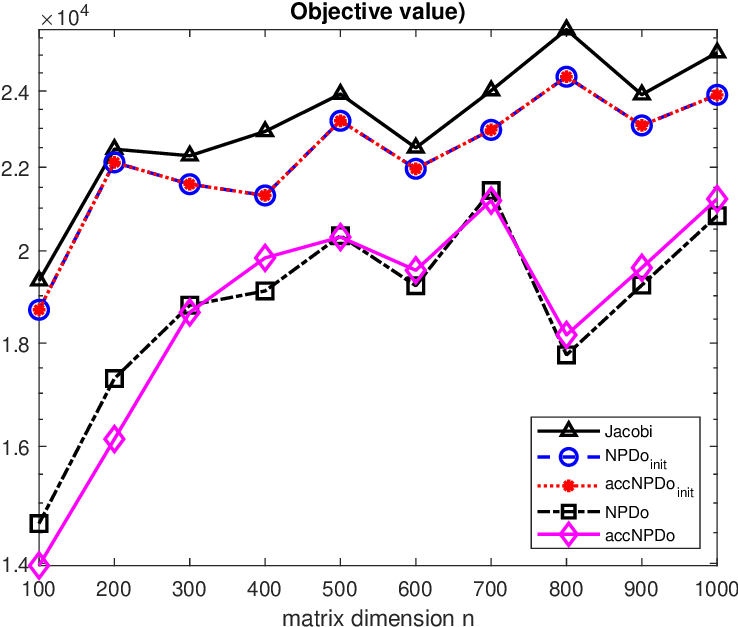}}
\end{tabular}\par
}
\vspace{-0.15 cm}
\caption{\small
   Effect of initial guess on the same testing sets that generates \Cref{fig:aJD-n-vary}.
  }
\label{fig:aJD-n-vary-init}
\end{figure}
We have the following observations:
\begin{enumerate}[(1)]
  \item Previously in \Cref{fig:aJD-n-vary}, we already noticed that Jacobi's method struggled for $\eta=10^0$
        (the not approximately jointly diagonalizable case), not only it could not finish
        its computations beyond $n=200$ within hours but also produces smaller objective values for $n\le 200$ than NPDo and accNPDo.
        \Cref{fig:aJD-n-vary-init} shows that the new initial guess strategy does not help NPDo${}_{\init}$ and accNPDo${}_{\init}$ as far as
        better stationary points are concerned.
  \item Overwhelmingly, when $\eta\le 10^{-1}$ ((the approximately jointly diagonalizable case),  the new initial strategy helps NPDo${}_{\init}$ and accNPDo${}_{\init}$ to achieve larger objective values
        than corresponding NPDo and accNPDo, respectively, that use random initials, but the improvements are still short to make
        NPDo${}_{\init}$ and accNPDo${}_{\init}$ overtake Jacobi's method.
\end{enumerate}

We also experimented with this initial guess strategy for \jbd, too, i.e., besides random initials that yield \Cref{fig:aJBD-n-vary},
at the same time we also used the initial guess strategy:  a very rough approximate eigen-basis matrix associated with the $k$ largest eigenvalues of
$A_{\tot}$. \Cref{fig:aJBD-n-vary-init} compares the objective values by NPDo and accNPDo (with random initials),
and NPDo${}_{\init}$ and accNPDo${}_{\init}$ (with the new initial strategy).
\begin{figure}[t]
{\centering
\begin{tabular}{lcccc}
& $\eta=10^{0}$ & $\eta=10^{-1}$ & $\eta=10^{-2}$ & $\eta=10^{-3}$ \\ 
\rotatebox{90}{\hspace*{1.0cm}real}
  & \resizebox*{0.22\textwidth}{0.14\textheight}{\includegraphics{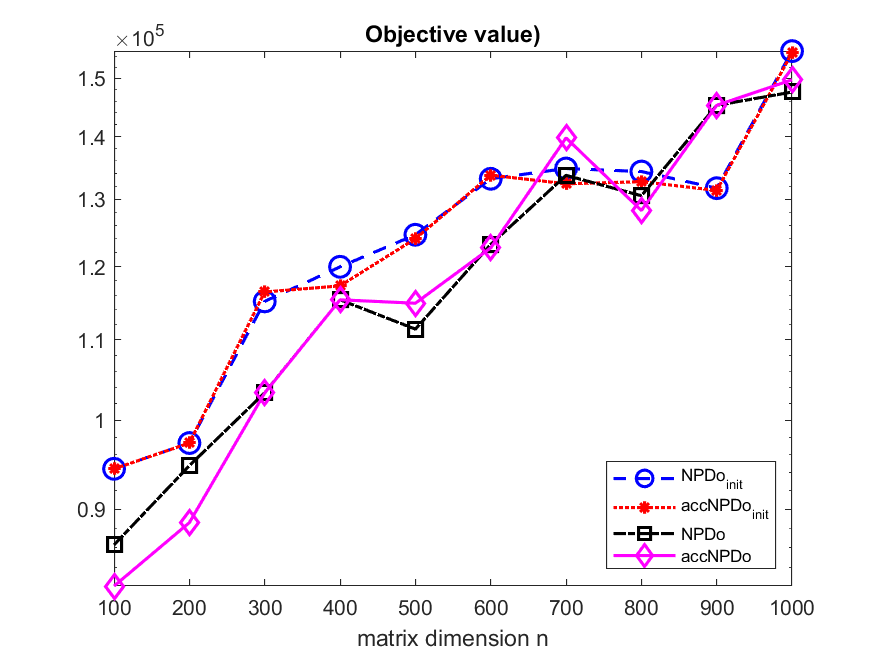}}
  & \resizebox*{0.22\textwidth}{0.14\textheight}{\includegraphics{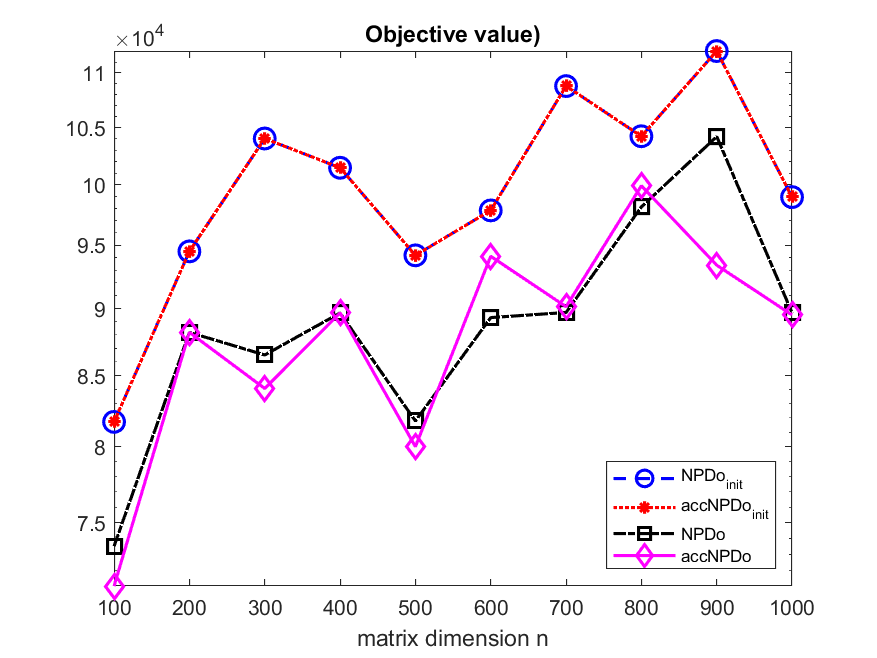}}
  & \resizebox*{0.22\textwidth}{0.14\textheight}{\includegraphics{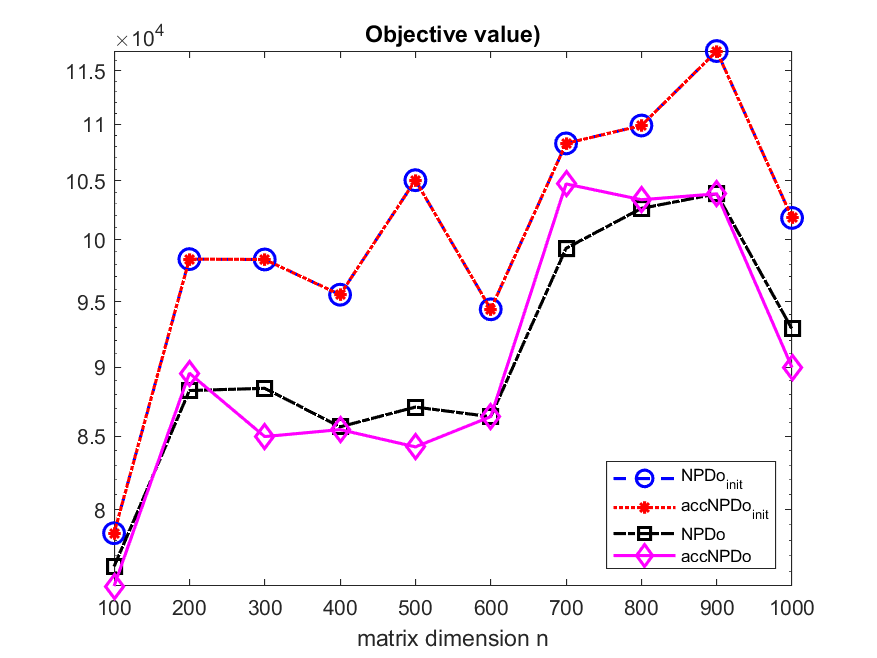}}
  & \resizebox*{0.22\textwidth}{0.14\textheight}{\includegraphics{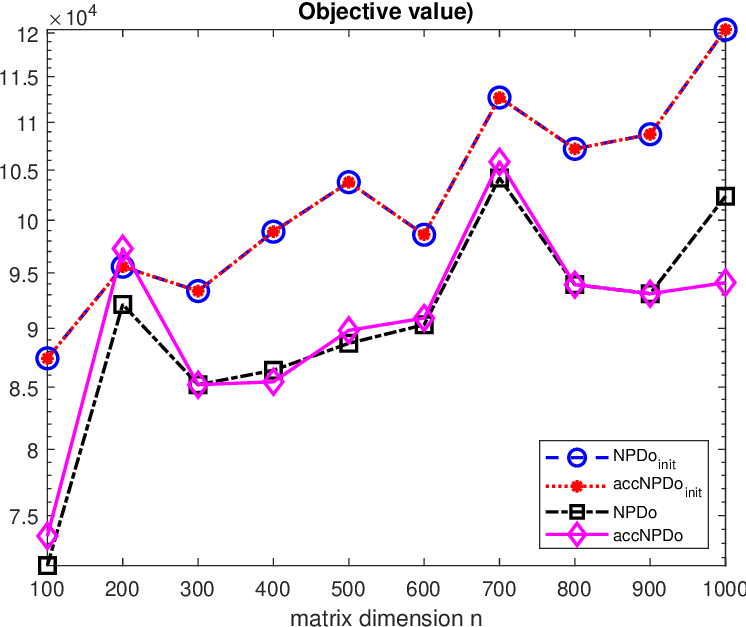}} \\ 
\rotatebox{90}{\hspace*{0.8cm}complex}
  & \resizebox*{0.22\textwidth}{0.14\textheight}{\includegraphics{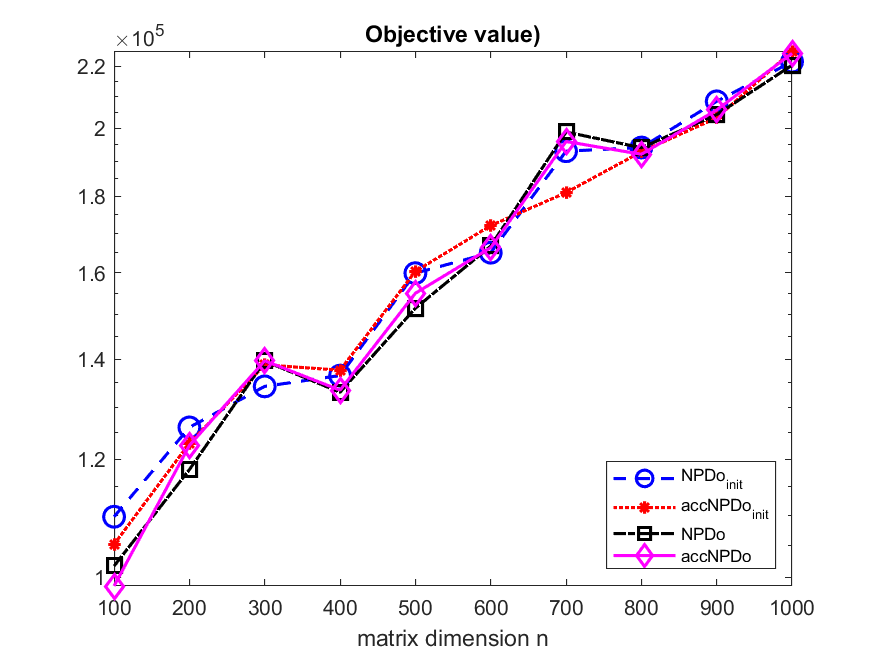}}
  & \resizebox*{0.22\textwidth}{0.14\textheight}{\includegraphics{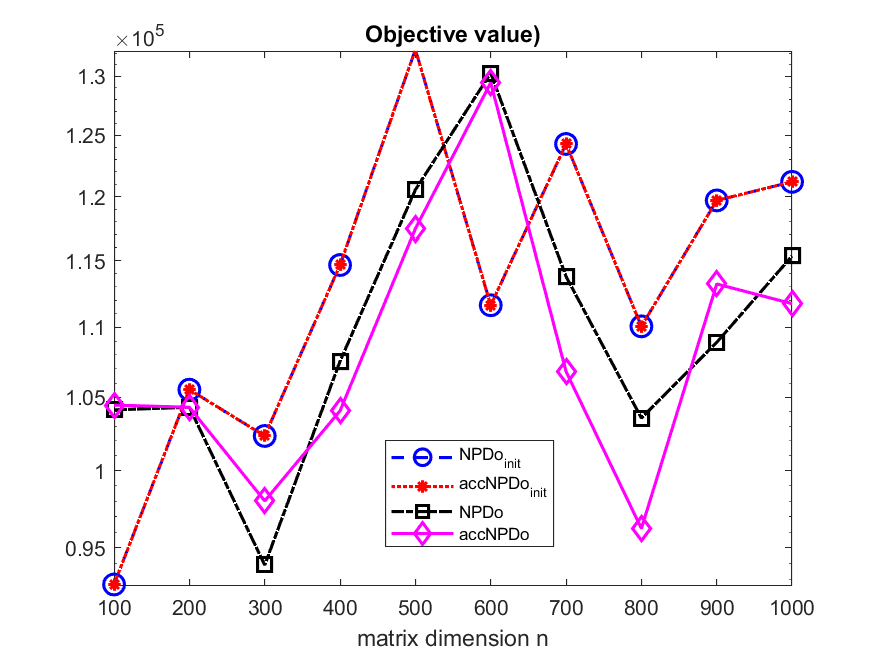}}
  & \resizebox*{0.22\textwidth}{0.14\textheight}{\includegraphics{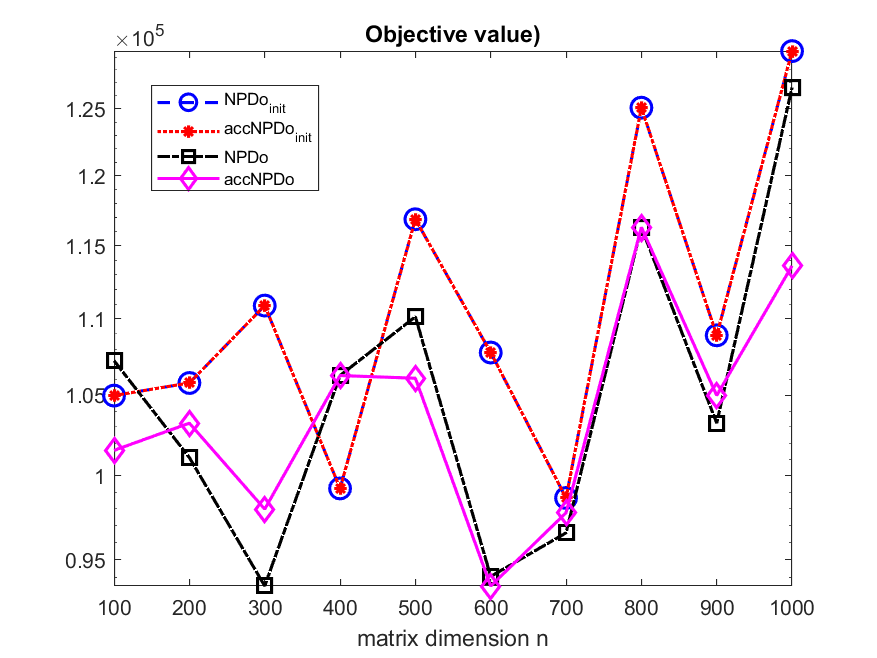}}
  & \resizebox*{0.22\textwidth}{0.14\textheight}{\includegraphics{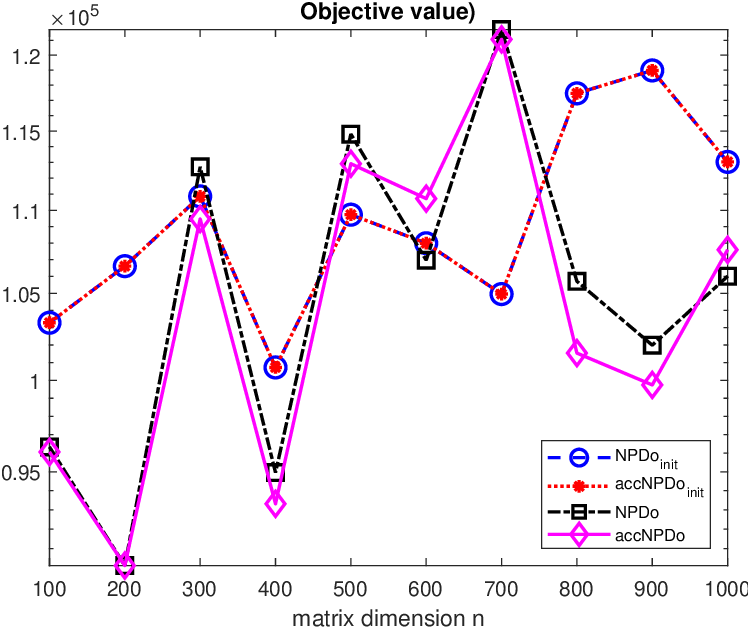}}
\end{tabular}\par
}
\vspace{-0.15 cm}
\caption{\small
   Effect of initial guess on the same testing sets that generates \Cref{fig:aJBD-n-vary}.
  }
\label{fig:aJBD-n-vary-init}
\end{figure}
Although there are a few times that random initials win, overwhelmingly, NPDo${}_{\init}$ and accNPDo${}_{\init}$ compute better stationary points
in terms of larger objective values.


\section{Conclusion}\label{sec:concl}
We studied the problem of computing the most dominant
joint block-diagonalizer for a given set $\{A_{\ell}\}_{\ell=1}^N$  of $N$ Hermitian matrices
in $\bbC^{n\times n}$,
that partially and unitarily block-diagonalizes all $A_{\ell}$ approximately and optimally.
To this end, we  propose to maximize the total joint
block-diagonal part under a common partial
congruent transformation by an othonormal matrix for all $A_{\ell}$:
\begin{equation}\label{eq:opt-pjbd'}
\max_{P\in\STM{k}{n}} \left\{ f(P):=\sum_{\ell=1}^N\|\BDiag_{\tau_k}(P^{\HH}A_{\ell}P)\|_{\F}^2\right\},
\end{equation}
where $1\le k\le n$ and $\tau_k=(k_1,\ldots,k_t)$ as in \eqref{eq:tau-n} specifies the joint block-diagonal structure.
In contrast to most existing approaches that focus on the case $k=n$
(i.e., the full joint block-diagonalization),
the new model  allows for the more general case of $k\le n$.
This is particularly important in modern applications involving
high dimensional data, where full block-diagonalization can be prohibitively
expensive and wasteful too, and a partial block-diagonalization with $k\ll n$ is
really what one needs in practice. We name it {\em principal joint block-diagonalization} (\pjbd) in general and
{\em principal block-diagonalization} (\pjd) which
aligns naturally with the goal of popular principal component analysis (PCA).

We propose to solve the {\pjbd} problem \eqref{eq:opt-pjbd'} by the NPDo approach
developed in~\cite{li:2024}, which is built on its strong
connection to numerical linear algebra to allow efficient computation.
This approach suits particularly well for large scale problems.
Two numerical solution techniques have been explored,
including the SCF iteration that globally converges
to a stationary point while produces a monotonically increasing convergent sequence
of the objective values,
and a subspace acceleration scheme based on the locally optimal conjugate
gradient technique.
Extensive numerical experiments demonstrated the
effectiveness of our methods and their superiority to Jacobi-type methods
for matrices of sizes $200$-by-$200$ or larger in the case of \pjd.

{\small
\def\noopsort#1{}\def\l{\char32l}\def\v#1{{\accent20 #1}}
  \let\^^_=\v\def\hbk{hardback}\def\pbk{paperback}

}

\end{document}